\providecommand{\U}[1]{\protect\rule{.1in}{.1in}}
\newtheorem{theorem}{Theorem}
\theoremstyle{plain}
\newtheorem{corollary}{Corollary}
\newtheorem{definition}{Definition}
\newtheorem{lemma}{Lemma}
\newtheorem{proposition}{Proposition}
\newtheorem{remark}{Remark}
\DeclareMathOperator{\Div}{div}
\numberwithin{equation}{section}
\numberwithin{theorem}{section}
\numberwithin{proposition}{section}
\numberwithin{remark}{section}
\numberwithin{definition}{section}
\numberwithin{lemma}{section}
\numberwithin{corollary}{section}
\numberwithin{example}{section}
\numberwithin{claim}{section}
\begin{document}
\title[Homogenization of Linear Boltzmann Equations]{Homogenization of Linear Boltzmann Equations in the Context of Algebras with
Mean Value}
\author{P. Fouegap}
\address{P. Fouegap, Department of Mathematics and Computer Science, University of
Dschang, P.O. Box 67, Dschang, Cameroon}
\email{minlefack@gmail.com}
\author{R. Kenne B.}
\address{R. Kenne B., Department of Mathematics, University of Yaounde I, P.O. Box 812,
Yaounde, Cameroon}
\email{rodriguekenne@ymail.com}
\author{G. Nguetseng}
\address{G. Nguetseng, Department of Mathematics, University of Yaounde I, P.O. Box
812, Yaounde, Cameroon}
\email{nguetsengg@yahoo.fr}
\author{D. Dongo}
\address{D. Dongo, Department of Mathematics and Computer Science, University of
Dschang, P.O. Box 67, Dschang, Cameroon}
\email{dongodavid@yahoo.fr}
\author{J.L. Woukeng}
\address{J.L. Woukeng, Department of Mathematics and Computer Science, University of
Dschang, P.O. Box 67, Dschang, Cameroon}
\email{jwoukeng@yahoo.fr}
\date{May 2020}
\subjclass[2000]{35B40, 45M05, 82C70, 85A25}
\keywords{Deterministic homogenization, Boltzmann equations, algebras with mean value, sigma-convergence}

\begin{abstract}
The paper deals with the homogenization of a linear Boltzmann equation by the
means of the sigma-convergence method. Under a general deterministic
assumption on the coefficients of the equation, we prove that the density of
the particles converges to a solution of a drift-diffusion equation. To
achieve our goal, we use the Krein-Rutman theorem for locally convex spaces
together with the Fredholm alternative to solve the so-called corrector problem.

\end{abstract}
\maketitle


\section{Introduction and the main result\label{sec1}}

An important topic in multiscale analysis is the derivation of macroscopic
model equations from the microscopic ones arising from kinetic theory. One of
the most important kinetic equations is the Boltzmann equation, which roughly
reads as
\begin{equation}
\dfrac{\partial f}{\partial t}+v\cdot\nabla_{x}f=\mathcal{Q}f\text{.}%
\label{*1}%
\end{equation}
In (\ref{*1}), the left-hand side accounts for the total derivative that takes
into account the free streaming of particles, while the right-hand side is the
collision operator describing interactions between particles. The unknown
function $f$ refers to the distribution function and is physically interpreted
as the probability of the density of particles in a given volume. In this
work, we deal with a specific type of collision operator (see (\ref{Lf1}))
leading to the linear Boltzmann equation. The linear Boltzmann equation is a
kinetic model used in many different contexts. It appeared (for the first
time; see \cite{Lorentz}) on the motion of electrons in metals, and has been
since then used in various branches of mathematics and physics such as
radiative transfer \cite{Mihalas, Pomraning}, neutron transfer theory
\cite{Weinberg}. The macroscopic effects come to light when the time elapsing
between collisions is much smaller than the observation time scale. This
amounts to saying that, when the average distance between two successive
collisions is smaller than the given specimen length scale. In that case, it
therefore becomes interesting to seek the solution $f$ of (\ref{*1}) under the
form $f(t,x,v)=f_{\varepsilon}(\varepsilon t,x,v)$ where $0<\varepsilon<<1$ is
a small dimensionless parameter. So, considering the rescaled time variable
$\tau=\varepsilon t$, we see that (\ref{*1}) takes the form
\begin{equation}
\varepsilon\dfrac{\partial f_{\varepsilon}}{\partial\tau}(\tau,x,v)+v\cdot
\nabla_{x}f_{\varepsilon}(\tau,x,v)=\dfrac{1}{\varepsilon}\mathcal{Q}%
_{\varepsilon}f_{\varepsilon}(\tau,x,v).\label{*2}%
\end{equation}
Instead of (\ref{*2}) we rather consider the following initial value problem
\begin{equation}
\left\{
\begin{array}
[c]{l}%
\varepsilon\dfrac{\partial f_{\varepsilon}}{\partial t}+a(v)\cdot\nabla
_{x}f_{\varepsilon}=\dfrac{1}{\varepsilon}\mathcal{Q}_{\varepsilon
}f_{\varepsilon}\text{ in }(0,T)\times\mathbb{R}_{x}^{d}\times V\\
\\
f_{\varepsilon}(0,x,v)=f^{0}(x,v)\text{ in }\mathbb{R}_{x}^{d}\times V.
\end{array}
\right.  \label{EH0}%
\end{equation}
Such a problem naturally arises when modeling the behaviour of a cloud of
particles. The unknown function $f_{\varepsilon}(t,x,v)\geq0$ can be
interpreted as the density of particles occupying at time $t$, the position
$x$ with a physical state described by the variable $v$. As usually, $v$ is
the translation velocity of the particle and lies to the $d$-dimension space
$V\subset\mathbb{R}_{v}^{d}$ ($\mathbb{R}_{v}^{d}$ is the numerical space
$\mathbb{R}^{d}$ of generic variable $v$, integer $d\geq1$). The set $V$ is
endowed with a measure $d\mu$ whose crucial properties will be specified
later. The left-hand side of the first equation in (\ref{EH0}) describes the
transport of particles with the velocity field $a:V\rightarrow\mathbb{R}^{d}$,
while the right hand side takes into account the interactions that the
particles may undergo which crossing device. Coming back to (\ref{EH0}), the
$\varepsilon$ in front of the time derivative is related to the long time
scaling, while the $1/\varepsilon$ in front of collision operator means that
particles undergo with more and more interactions \cite{Goudon-Mellet}. Those
interactions modify the physical state of the particles, and are localized in
time and space. They can be described by the following integral operator:
\begin{equation}
\mathcal{Q}_{\varepsilon}f_{\varepsilon}(t,x,v))=\int_{V}\sigma^{\varepsilon
}(x,v,w)f_{\varepsilon}(t,x,w)d\mu(w)-\Sigma^{\varepsilon}(x,v)f_{\varepsilon
}(t,x,v),\label{Lf1}%
\end{equation}
where $\sigma^{\varepsilon}(x,v,w)=\sigma(x,\frac{x}{\varepsilon},v,w)$ (resp.
$\Sigma^{\varepsilon}(x,v)=\Sigma(x,\frac{x}{\varepsilon},v)$) is a
nonnegative function defined a.e. on $\mathbb{R}_{x}^{d}\times\mathbb{R}%
_{y}^{d}\times V\times V$ (resp. on $\mathbb{R}_{x}^{d}\times\mathbb{R}%
_{y}^{d}\times V$). The function $\sigma$ is called transition or scattering
rate, while $\Sigma$ is the absorption rate. In the case of this work, they
are both given. Assuming that the total density is conserved, that is
\begin{equation}
\int_{\mathbb{R}_{x}^{d}\times V}f^{\varepsilon}(t,x,v)d\mu(v)dx=\int
_{\mathbb{R}_{x}^{d}\times V}f^{0}(x,v)d\mu(v)dx,\label{todencon}%
\end{equation}
we are led to the following relation
\begin{equation}
\Sigma^{\varepsilon}(x,v)=\int_{V}\sigma^{\varepsilon}(x,w,v)d\mu
(w).\label{tdnc}%
\end{equation}
Moreover, we also assume that $\sigma$ (see e.g. \cite{BARDOS-BER-GOLSE,
Bardos}) satisfies the semi-detailed balance condition
\begin{equation}
\int_{V}\sigma^{\varepsilon}(x,v,w)d\mu(w)=\int_{V}\sigma^{\varepsilon
}(x,w,v)d\mu(w).\label{sdbc}%
\end{equation}

As in \cite{Goudon-Mellet}, we state some important hypotheses on the measure
space $(V,d\mu)$ and the velocity field $a(v)$. We suppose that
\begin{equation}
\left\{
\begin{array}
[c]{l}%
V\text{ is a compact subset of }\mathbb{R}^{d}\text{ and the measure }%
\mu\text{ satisfies }\mu(V)<\infty.\\
\text{The velocity field }a:V\rightarrow\mathbb{R}^{d}\text{ lies in
}W^{1,\infty}(V).\\
\text{There exists two constants }C,\gamma>0\text{ such that}\emph{\text{ }}\\
\mu(\left\{  v\in V:\left\vert a(v)\cdot\xi\right\vert \leq h\right\}  )\leq
Ch^{\gamma}\text{ for all }\xi\in S^{d-1},h>0
\end{array}
\right.  \label{H1}%
\end{equation}
where $S^{d-1}$ stands for the $d$-dimensional sphere in $\mathbb{R}^{d}$.
Next we also need the following assumption on $\sigma$:
\begin{equation}
\sigma\in\mathcal{B}(\mathbb{R}_{x}^{d}\times V\times V;B_{A}^{2,\infty
}(\mathbb{R}_{y}^{d})).\label{H2}%
\end{equation}
In (\ref{H2}), $A$ is a given algebra with mean value on $\mathbb{R}^{d}$
(that is, a closed subalgebra of the $\mathcal{C}^{\ast}$-Banach algebra of
bounded uniformly continuous real-valued functions on $\mathbb{R}^{d}$ that
contains the constants, is close under complex conjugation, is translation
invariant and is such that each of its elements $u$ possesses a mean value
$M(u)=\mathchoice {{\setbox0=\hbox{$\displaystyle{\textstyle
-}{\int}$ } \vcenter{\hbox{$\textstyle -$
}}\kern-.6\wd0}}{{\setbox0=\hbox{$\textstyle{\scriptstyle -}{\int}$ }
\vcenter{\hbox{$\scriptstyle -$
}}\kern-.6\wd0}}{{\setbox0=\hbox{$\scriptstyle{\scriptscriptstyle -}{\int}$
} \vcenter{\hbox{$\scriptscriptstyle -$
}}\kern-.6\wd0}}{{\setbox0=\hbox{$\scriptscriptstyle{\scriptscriptstyle
-}{\int}$ } \vcenter{\hbox{$\scriptscriptstyle -$ }}\kern-.6\wd0}}\!\int
_{B_{R}}u(y)dy$ where we set once and for all
$\mathchoice {{\setbox0=\hbox{$\displaystyle{\textstyle
-}{\int}$ } \vcenter{\hbox{$\textstyle -$
}}\kern-.6\wd0}}{{\setbox0=\hbox{$\textstyle{\scriptstyle -}{\int}$ }
\vcenter{\hbox{$\scriptstyle -$
}}\kern-.6\wd0}}{{\setbox0=\hbox{$\scriptstyle{\scriptscriptstyle -}{\int}$
} \vcenter{\hbox{$\scriptscriptstyle -$
}}\kern-.6\wd0}}{{\setbox0=\hbox{$\scriptscriptstyle{\scriptscriptstyle
-}{\int}$ } \vcenter{\hbox{$\scriptscriptstyle -$ }}\kern-.6\wd0}}\!\int
_{S}=\frac{1}{\left\vert S\right\vert }\int_{S}$ for any measurable set
$S\subset\mathbb{R}^{d}$), $B_{A}^{2,\infty}(\mathbb{R}_{y}^{d})=B_{A}%
^{2}(\mathbb{R}_{y}^{d})\cap L^{\infty}(\mathbb{R}_{y}^{d})$ where $B_{A}%
^{2}(\mathbb{R}^{d})$ is the generalized Besicovitch space defined as the
closure of the algebra $A$ with respect to the seminorm $\left\Vert
u\right\Vert _{2}=(M(\left\vert u\right\vert ^{2}))^{\frac{1}{2}}$, and
$\mathcal{B}(\mathbb{R}^{d}\times V\times V;B_{A}^{2,\infty}(\mathbb{R}%
_{y}^{d}))$ denotes the space of bounded continuous functions from
$\mathbb{R}^{d}\times V\times V$ into $B_{A}^{2,\infty}(\mathbb{R}_{y}^{d})$.
The above spaces will be precise in the next section. However let us pay
special attention to assumption (\ref{H2}). When accounting of the specific
properties of the medium in which the collisions occur, the analysis of our
model becomes more involved and necessitates special attention. These
properties are naturally included in the behaviour of the scattering rate
function $\sigma$, and they influence the overall behaviour of the density
function $f_{\varepsilon}$. They depend on the way the microstructures are
distributed in the heterogeneous medium (where the collisions occur). For
example, we could assume that the medium is made of microstructures that are
either uniformly distributed inside or almost uniformly distributed, or assume
another kind of deterministic distribution. This leads to a function
$y\mapsto\sigma(x,y,v,w)$ which is assumed to be either periodic (with respect
to $y$) or almost periodic, or even asymptotic almost periodic. All these
properties are included in assumption (\ref{H2}).

Finally we suppose that the initial distribution function $f^{0}$, satisfies
\begin{equation}
f^{0}(x,v)>0\text{ and }\int_{\mathbb{R}^{d}\times V}\left\vert f^{0}%
\right\vert ^{2}(x,v)dxd\mu(v)\leq C_{0}<\infty. \label{H3}%
\end{equation}

Under hypotheses (\ref{H1})-(\ref{H3}), the Cauchy problem (\ref{EH0}) has
(for each fixed $\varepsilon>0$) a unique weak solution $f_{\varepsilon}%
\in\mathcal{C}([0,T];L^{2}(\mathbb{R}^{d}\times V))$ (see e.g.
\cite{BARDOS-BER-GOLSE, Bardos} for the proof), which further satisfies the
following estimate
\begin{equation}
\left\Vert f_{\varepsilon}\right\Vert _{L^{\infty}([0,T];L^{2}(\mathbb{R}%
^{d}\times V))}\leq C \label{estimf}%
\end{equation}
where $C>0$ is independent of $\varepsilon>0$. The proof of estimate
(\ref{estimf}) follows from direct application of the energy method to the
system (\ref{EH0}), using the semi-detailed balance condition (\ref{sdbc}).

Defining the operator $P$ on $L^{2}(V;B_{A}^{2}(\mathbb{R}_{y}^{d}))$ by
\[
Pu=a(v)\cdot\nabla_{y}u-\mathcal{Q}u\ \ (u\in L^{2}(V;B_{A}^{2}(\mathbb{R}%
_{y}^{d})))
\]
where $\mathcal{Q}u(x,y,v)=\int_{V}\sigma(x,y,v,w)(u(y,w)-u(y,v))d\mu(w)$, we
know that there exists a unique $F(x,\cdot,\cdot)\in L^{2}(V;B_{A}%
^{2}(\mathbb{R}_{y}^{d}))$ such that
\begin{equation}
PF=0\text{, }\int_{V}M(F(x,\cdot,v))d\mu(v)=1\text{ and }F>0\text{ a.e.;}
\label{0.1}%
\end{equation}
see Proposition \ref{propkers}. We assume that the velocity field $a(v)$
satisfies the following vanishing flux condition:
\begin{equation}
\int_{V}M(a(v)F(x,\cdot,v))d\mu(v)=0. \label{vfc}%
\end{equation}
The following theorem is the main result of the work.

\begin{theorem}
\label{theosol}Let $A$ be an algebra with mean value on $\mathbb{R}^{d}$.
Assume \emph{(\ref{H2})} and \emph{(\ref{vfc})} hold. For each $\varepsilon
>0$, let $f_{\varepsilon}$ be the unique solution of \emph{(\ref{EH0})}. Then,
there exists $f_{0}+\mathcal{N}\in L^{2}(\mathbb{R}_{T}^{d}\times V;B_{A}%
^{2}(\mathbb{R}^{d})/\mathcal{N})$ such that the sequence $(f_{\varepsilon
})_{\varepsilon\in E}$ weakly sigma-converges in $L^{2}((0,T)\times
\mathbb{R}_{x}^{d}\times V)$ towards $f_{0}+\mathcal{N}$. Moreover $f_{0}$ has
the form $f_{0}(t,x,y,v)=F(x,y,v)\rho_{0}(t,x)$ where $F\in\mathcal{C}%
^{1}(\mathbb{R}_{x}^{d};L^{2}(V;B_{A}^{2}(\mathbb{R}_{y}^{d}))\cap\ker P)$ is
given by \emph{(\ref{0.1})} and $\rho_{0}$ is the unique solution of the
Cauchy problem
\begin{equation}
\left\{
\begin{array}
[c]{l}%
\frac{\partial\rho_{0}}{\partial t}-\Div_{x}\left(  D(x)^{T}\nabla_{x}\rho
_{0}+U(x)\rho_{0}\right)  =0\text{ in }(0,T)\times\mathbb{R}_{x}%
^{d}\ \ \ \ \ \ \ \ \ \ \ \ \ \\
\\
\rho_{0}(0,x)=\int_{V}f^{0}(x,v)d\mu(v)\text{ in }\mathbb{R}_{x}^{d}%
\end{array}
\right.  \label{solf0}%
\end{equation}
where
\[
D(x)=\int_{V}M\left(  \chi^{\ast}\otimes(a(v)F)\right)  d\mu(v)=\left(
\int_{V}M\left(  \chi_{i}^{\ast}a_{j}(v)F\right)  d\mu(v)\right)  _{1\leq
i,j\leq d}%
\]
with $(\chi^{\ast}\otimes a(v)F)=\left(  \chi_{i}^{\ast}a_{j}(v)F\right)
_{1\leq i,j\leq d}$,%
\[
U(x)=\int_{V}M\left(  \chi^{\ast}a(v)\cdot\nabla_{x}F\right)  d\mu(v)=\left(
\int_{V}M\left(  \chi_{i}^{\ast}a(v)\cdot\nabla_{x}F\right)  d\mu(v)\right)
_{1\leq i\leq d}%
\]
and where $\chi^{\ast}$ the unique solution in $\mathcal{C}_{0}^{\infty
}(\mathbb{R}^{d}\times V;B_{A}^{2}(\mathbb{R}_{y}^{d}))^{d}$ of the corrector
problem:
\begin{equation}
P^{\ast}\chi^{\ast}=-a(v)\text{ and }\int_{V}M(\chi^{\ast}(x,\cdot
,v))d\mu(v)=0,\label{defchi}%
\end{equation}
$P^{\ast}$ being the adjoint operator of $P$.
\end{theorem}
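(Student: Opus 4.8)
The plan is to combine the uniform bound (\ref{estimf}) with the compactness of $\Sigma$-convergence to extract a limit, to pin down its structure through the cell operator $P$, and then to recover the homogenized equation by testing against an oscillating corrector built from $\chi^{\ast}$. First I would use (\ref{estimf}) to fix, up to a subsequence $E'\subset E$, a function $f_{0}\in L^{2}((0,T)\times\mathbb{R}^{d}\times V;B_{A}^{2}(\mathbb{R}_{y}^{d}))$ (identified in the quotient by $\mathcal{N}$) toward which $(f_{\varepsilon})$ weakly $\Sigma$-converges; this follows from the $L^{\infty}(0,T;L^{2})$ bound and weak $\Sigma$-compactness of bounded sequences. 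To read off the structure of $f_{0}$, I would write the weak formulation of (\ref{EH0}), multiply through by $\varepsilon$, and test against $\varepsilon\varphi_{0}(t,x)g(x/\varepsilon,v)$ with $g$ smooth in $y$ and $v$; in the limit the time derivative and the macroscopic transport drop out at order $\varepsilon$, while the surviving $1/\varepsilon$ terms force $a(v)\cdot\nabla_{y}f_{0}=\mathcal{Q}f_{0}$, that is $Pf_{0}=0$ in the $\Sigma$-sense. Invoking Proposition \ref{propkers} (where the Krein--Rutman theorem yields a one-dimensional positive kernel), I conclude $f_{0}(t,x,y,v)=F(x,y,v)\rho_{0}(t,x)$ for some scalar $\rho_{0}$.

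Next I would address the corrector. The effective coefficients must come from an auxiliary problem, and the natural one is the adjoint corrector (\ref{defchi}): $P^{\ast}\chi^{\ast}=-a(v)$. Its solvability is governed by the Fredholm alternative for $P^{\ast}$, whose range is the orthogonal complement of $\ker P=\mathrm{span}\{F\}$; the compatibility condition $\langle -a_{i},F\rangle=0$ reads precisely $\int_{V}M(a(v)F)\,d\mu=0$, which is the vanishing flux hypothesis (\ref{vfc}). Thus (\ref{vfc}) is exactly what makes $\chi^{\ast}$ exist, and the spectral data of Proposition \ref{propkers} (one-dimensional kernels of $P$ and of $P^{\ast}$, the latter spanned by the constant $1$ thanks to the semi-detailed balance (\ref{sdbc}), since it gives $\int_{V}\mathcal{Q}u\,d\mu=0$) is what makes the Fredholm framework applicable. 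With $\chi^{\ast}$ in hand I would choose the oscillating test function $\phi_{\varepsilon}(t,x,v)=\psi(t,x)-\varepsilon\,\chi^{\ast}(x,x/\varepsilon,v)\cdot\nabla_{x}\psi(t,x)$ with $\psi\in\mathcal{C}_{0}^{\infty}([0,T)\times\mathbb{R}^{d})$, and insert it into the weak formulation written with the adjoint collision operator $\mathcal{Q}_{\varepsilon}^{\ast}$.

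The decisive computation is that the singular contribution $a(v)\cdot\nabla_{x}\phi_{\varepsilon}+\tfrac{1}{\varepsilon}\mathcal{Q}_{\varepsilon}^{\ast}\phi_{\varepsilon}$ is, by design, of order $\varepsilon$: the order-one terms collapse because $a(v)\cdot\nabla_{y}\chi_{i}^{\ast}+\mathcal{Q}^{\ast}\chi_{i}^{\ast}=-P^{\ast}\chi_{i}^{\ast}=a_{i}(v)$ cancels the bare transport $a(v)\cdot\nabla_{x}\psi$. Dividing the identity by $\varepsilon$ leaves, at leading order, the time term $-\int f_{\varepsilon}\partial_{t}\psi$, the initial term $-\int f^{0}\psi(0)$, and the residual $\varepsilon^{0}$ pieces $\int f_{\varepsilon}\sum_{i}[(a\cdot\nabla_{x}\chi_{i}^{\ast})\partial_{x_{i}}\psi+\chi_{i}^{\ast}\,a\cdot\nabla_{x}\partial_{x_{i}}\psi]$. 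Passing to the $\Sigma$-limit (the coefficients $\chi_{i}^{\ast}(x,x/\varepsilon,v)$ and $\nabla_{x}\chi_{i}^{\ast}(x,x/\varepsilon,v)$ being admissible oscillating test functions) and replacing $f_{\varepsilon}$ by $F\rho_{0}$, I would integrate in $v$ against the mean value $M$, use $\int_{V}M(F)\,d\mu=1$ for the time and initial terms, and recognize the $\chi^{\ast}$-integrals as the entries $D_{ij}=\int_{V}M(\chi_{i}^{\ast}a_{j}F)\,d\mu$ and $U_{i}=\int_{V}M(\chi_{i}^{\ast}a\cdot\nabla_{x}F)\,d\mu$. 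After integrating by parts in $x$ to move derivatives off $\psi$ and reorganizing (which is where the transpose $D^{T}$ and the precise sign of $U$ appear), this is exactly the weak form of the drift--diffusion problem (\ref{solf0}); the datum matches because $\rho_{0}(0,x)=\int_{V}f^{0}\,d\mu$. Uniqueness for this linear parabolic equation then upgrades subsequential convergence to convergence of the whole family. I expect the main obstacle to be the simultaneous passage to the limit in the two singular $1/\varepsilon$ terms: this is where the design of $\phi_{\varepsilon}$ through the adjoint corrector, the regularity $\chi^{\ast}\in\mathcal{C}_{0}^{\infty}(\mathbb{R}^{d}\times V;B_{A}^{2}(\mathbb{R}_{y}^{d}))^{d}$, and the $\Sigma$-convergence product rules must all be controlled at once, together with checking that $D(x)$ and $U(x)$ are well defined and regular enough for (\ref{solf0}) to be meaningful.
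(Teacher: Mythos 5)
Your proposal follows essentially the same route as the paper: extract the weak $\Sigma$-limit from (\ref{estimf}), identify $Pf_{0}=0$ by scaling the weak formulation, factor $f_{0}=F\rho_{0}$ via the Krein--Rutman result of Proposition \ref{propkers}, solve the adjoint corrector problem by the Fredholm alternative using (\ref{vfc}) as the compatibility condition, and pass to the limit with the oscillating test function $\phi\pm\varepsilon\,\chi^{\ast}\cdot\nabla_{x}\phi$ to obtain the drift--diffusion equation and its initial datum. The only deviations are cosmetic (your sign convention in the corrector test function, which is in fact the one consistent with the paper's definition $P^{\ast}\phi=-a(v)\cdot\nabla_{y}\phi-\mathcal{Q}^{\ast}\phi$, and your handling of the initial condition in a single pass rather than a second one), so the argument is correct and matches the paper's proof.
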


The equation (\ref{solf0}) models a drift-diffusion process. The diffusion
arises by the term $\Div_{x}(D(x)^{T}\nabla_{x}\rho_{0})$ which represents the
spatial redistribution of particles forced by their own kinetic energy, while
the drift, represented by the term $\Div_{x}(U(x)\rho_{0})$, models the
transport of particles forced by a given outer field or a field generated by
the particles in a self-consistent manner.

Theorem \ref{theosol} has been proved in \cite{Goudon-Mellet} in the periodic
setting, that is, when assuming that the scattering function $\sigma$ is
periodic. To achieve their goal in \cite{Goudon-Mellet}, the authors used the
Krein-Rutman theorem in Banach spaces (see \cite[Theorem VI.12, page
100]{brezis}). However, as the Besicovitch spaces $B_{A}^{p}(\mathbb{R}^{d})$
are not Banach spaces (but rather complete locally convex topological vector
spaces) we can not expect to use the initial version of that theorem as
presented in \cite{brezis}. Fortunately the initial version has been
generalized to locally convex spaces by Schaefer \cite{Schaefer}, and it is
the one that we use in this work, which allows one to relax the periodicity
assumption by considering a general deterministic assumption which includes
the periodic one and the almost periodic one as special cases. This falls
within the scope of deterministic homogenization theory which proceeds from
the juxtaposition of the algebras with mean value and the sigma-convergence
method. To the best of our knowledge, this is the first time that such a
result is obtained beyond the periodic setting.

The homogenization of Boltzmann equation remains an active field of study. We
may cite \cite{HUTRIDURGA, BARDOS-BER-GOLSE, Bardos, ABTAY, BerCagGolse,
Goudon-Mellet, MASTAY}. In \cite{ABTAY, MASTAY}, the authors study the case of
Diffusion limit of a semiconductor Boltzmann-Poisson by using the Hilbert
formal expansion, in the case of periodic oscillations. In \cite{HUTRIDURGA,
BerCagGolse, Goudon-Mellet} the authors investigate the asymptotic behaviour
of a linear Boltzmann equation by the two-scale convergence method. In all
these works, hypotheses were stated in the periodic setting. Here, we use the
sigma convergence method to handle the more general case including as special
settings, the almost periodic homogenization, the weak almost periodic one,
and others.

The rest of the paper is organized as follows. In Section \ref{sec3}, we
present some fundamental results about the concept of sigma-convergence.
Section \ref{sec4} deals with the corrector result. Section \ref{sec5} is
devoted to the proof of Theorem \ref{theosol}. Finally in Section \ref{sec6}
we provide some concrete examples in which Theorem \ref{theosol} applies.

\section{Algebras with mean value and Sigma convergence\label{sec3}}

In this section we recall the main properties and some basic facts about the
concept of sigma-convergence. We refer the reader to \cite{22, 28, Woukeng2}
for the details regarding most of the results of this section.

In the sequel of this work, we will often set $\mathbb{R}_{T}^{d}%
=(0,T)\times\mathbb{R}_{x}^{d}$.

\subsection{Algebra with mean value\label{subsec3.1}}

Let $A$ be an algebra with mean value (algebra wmv, for short) on
$\mathbb{R}^{d}$, that is, a closed subalgebra of the Banach algebra
$\mathrm{BUC}(\mathbb{R}^{d})$ (of bounded uniformly continuous real-valued
functions on $\mathbb{R}^{d}$) that contains the constants, is close under
complex conjugation ($\overline{u}\in A$ whenever $u\in A$), is translation
invariant ($\tau_{a}u=u(\cdot+a)\in A$ for any $u\in A$ and $a\in
\mathbb{R}^{d}$) and is such that any of its elements possesses a mean value
in the following sense: for every $u\in A$,%

\begin{equation}
M(u)=\lim_{R\rightarrow\infty}%
\mathchoice {{\setbox0=\hbox{$\displaystyle{\textstyle
-}{\int}$ } \vcenter{\hbox{$\textstyle -$
}}\kern-.6\wd0}}{{\setbox0=\hbox{$\textstyle{\scriptstyle -}{\int}$ } \vcenter{\hbox{$\scriptstyle -$
}}\kern-.6\wd0}}{{\setbox0=\hbox{$\scriptstyle{\scriptscriptstyle -}{\int}$
} \vcenter{\hbox{$\scriptscriptstyle -$
}}\kern-.6\wd0}}{{\setbox0=\hbox{$\scriptscriptstyle{\scriptscriptstyle
-}{\int}$ } \vcenter{\hbox{$\scriptscriptstyle -$ }}\kern-.6\wd0}}\!\int
_{B_{R}}{u(y)}dy \label{eq4.1}%
\end{equation}
where $B_{R}$ stands for the open ball in $\mathbb{R}^{d}$ of radius $R$
centered at the origin and
$\mathchoice {{\setbox0=\hbox{$\displaystyle{\textstyle
-}{\int}$ } \vcenter{\hbox{$\textstyle -$
}}\kern-.6\wd0}}{{\setbox0=\hbox{$\textstyle{\scriptstyle -}{\int}$ } \vcenter{\hbox{$\scriptstyle -$
}}\kern-.6\wd0}}{{\setbox0=\hbox{$\scriptstyle{\scriptscriptstyle -}{\int}$
} \vcenter{\hbox{$\scriptscriptstyle -$
}}\kern-.6\wd0}}{{\setbox0=\hbox{$\scriptscriptstyle{\scriptscriptstyle
-}{\int}$ } \vcenter{\hbox{$\scriptscriptstyle -$ }}\kern-.6\wd0}}\!\int
_{B_{R}}=\frac{1}{|B_{R}|}\int_{B_{R}}$.

Let $u\in\mathrm{BUC}(\mathbb{R}^{d})$ and assume that $M(u)$ exists. Then
defining the sequence $(u^{\varepsilon})_{\varepsilon>0}\subset\mathrm{BUC}%
(\mathbb{R}^{d})$ by $u^{\varepsilon}(x)=u(\frac{x}{\varepsilon})$ for
$x\in\mathbb{R}^{d}$, we have
\[
u^{\varepsilon}\rightarrow M(u)\text{ in }L^{\infty}(\mathbb{R}^{d}%
)\text{-weak}\ast\text{ as }\varepsilon\rightarrow0.
\]
This is an easy consequence of the fact that the set of finite linear
combinations of the characteristic functions of open balls in $\mathbb{R}^{d}$
is dense in $L^{1}(\mathbb{R}^{d})$.

Let $A$ be an algebra with mean value. Define the space $A^{\infty}$ by
\[
A^{\infty}=\left\{  u\in A:D_{y}^{\alpha}u\in A\text{ for every }%
\alpha=(\alpha_{1},...,\alpha_{d})\in\mathbb{N}^{d}\right\}  .
\]
Then endowed with the family of norms $\left\Vert \left\vert \cdot\right\vert
\right\Vert _{m}$ defined by $\left\Vert \left\vert u\right\vert \right\Vert
_{m}=\sup_{|\alpha|\leq m}\sup_{y\in\mathbb{R}^{d}}|D_{y}^{\alpha}u|$ where
$D_{y}^{\alpha}=\frac{\partial^{|\alpha|}}{\partial y_{1}^{\alpha_{1}%
}...\partial y_{d}^{\alpha_{d}}}$, $A^{\infty}$ is a Fr\'{e}chet space.

In order to define the generalized Besicovitch space, we first need to define
the Marcinkiewicz space $\mathfrak{M}^{p}(\mathbb{R}^{d})$ ($1\leq p<\infty$),
which is the space of functions $u\in{L}_{loc}^{p}(\mathbb{R}^{d})$ satisfying
$\limsup_{R\rightarrow\infty}%
\mathchoice {{\setbox0=\hbox{$\displaystyle{\textstyle
-}{\int}$ } \vcenter{\hbox{$\textstyle -$
}}\kern-.6\wd0}}{{\setbox0=\hbox{$\textstyle{\scriptstyle -}{\int}$ } \vcenter{\hbox{$\scriptstyle -$
}}\kern-.6\wd0}}{{\setbox0=\hbox{$\scriptstyle{\scriptscriptstyle -}{\int}$
} \vcenter{\hbox{$\scriptscriptstyle -$
}}\kern-.6\wd0}}{{\setbox0=\hbox{$\scriptscriptstyle{\scriptscriptstyle
-}{\int}$ } \vcenter{\hbox{$\scriptscriptstyle -$ }}\kern-.6\wd0}}\!\int
_{B_{R}}\left\vert {u(y)}\right\vert ^{p}dy<\infty$. Endowed with the
seminorm
\[
\left\Vert u\right\Vert _{p}=\limsup_{R\rightarrow\infty}\left(
\mathchoice {{\setbox0=\hbox{$\displaystyle{\textstyle
-}{\int}$ } \vcenter{\hbox{$\textstyle -$
}}\kern-.6\wd0}}{{\setbox0=\hbox{$\textstyle{\scriptstyle -}{\int}$ } \vcenter{\hbox{$\scriptstyle -$
}}\kern-.6\wd0}}{{\setbox0=\hbox{$\scriptstyle{\scriptscriptstyle -}{\int}$
} \vcenter{\hbox{$\scriptscriptstyle -$
}}\kern-.6\wd0}}{{\setbox0=\hbox{$\scriptscriptstyle{\scriptscriptstyle
-}{\int}$ } \vcenter{\hbox{$\scriptscriptstyle -$ }}\kern-.6\wd0}}\!\int
_{B_{R}}\left\vert {u(y)}\right\vert ^{p}dy\right)  ^{\frac{1}{p}},
\]
$\mathfrak{M}^{p}(\mathbb{R}^{d})$ is a complete seminormed space. Next we
define the generalized Besicovitch space $B_{A}^{p}(\mathbb{R}^{d})$ ($1\leq
p<\infty$) associated to the algebra with mean value $A$ as the closure in
$\mathfrak{M}^{p}(\mathbb{R}^{d})$ of $A$ with respect to $\left\Vert
\cdot\right\Vert _{p}$. It is easy to see that for $f\in A$ and $0<p<\infty$,
$\left\vert f\right\vert ^{p}\in A$, so that
\begin{equation}
\left\Vert f\right\Vert _{p}=\left(  \lim_{R\rightarrow\infty}%
\mathchoice {{\setbox0=\hbox{$\displaystyle{\textstyle
-}{\int}$ } \vcenter{\hbox{$\textstyle -$
}}\kern-.6\wd0}}{{\setbox0=\hbox{$\textstyle{\scriptstyle -}{\int}$ } \vcenter{\hbox{$\scriptstyle -$
}}\kern-.6\wd0}}{{\setbox0=\hbox{$\scriptstyle{\scriptscriptstyle -}{\int}$
} \vcenter{\hbox{$\scriptscriptstyle -$
}}\kern-.6\wd0}}{{\setbox0=\hbox{$\scriptscriptstyle{\scriptscriptstyle
-}{\int}$ } \vcenter{\hbox{$\scriptscriptstyle -$ }}\kern-.6\wd0}}\!\int
_{B_{R}}{\left\vert f(y)\right\vert ^{p}}\right)  ^{\frac{1}{p}}\equiv\left(
M(\left\vert f\right\vert ^{p})\right)  ^{\frac{1}{p}}. \label{3.0}%
\end{equation}
The equality (\ref{3.0}) extends by continuity to any $f\in B_{A}%
^{p}(\mathbb{R}^{d})$. Equipped with the seminorm (\ref{3.0}), $B_{A}%
^{p}(\mathbb{R}^{d})$ is a complete seminormed space. We refer the reader to
\cite{MSANGO, 28, Woukeng2} for further details about these spaces. Namely,
the following holds true:

\begin{enumerate}
\item[(1)] The space $\mathcal{B}_{A}^{p}(\mathbb{R}^{d})=B_{A}^{p}%
(\mathbb{R}^{d})/\mathcal{N}$, (where $\mathcal{N}=\{u\in B_{A}^{p}%
(\mathbb{R}^{d}):\left\Vert u\right\Vert _{p}=0\}$) is a Banach space under
the norm $\left\Vert u+\mathcal{N}\right\Vert _{p}=\left\Vert u\right\Vert
_{p}$ for $u\in B_{A}^{p}(\mathbb{R}^{d})$.

\item[(2)] The mean value $M:A\rightarrow\mathbb{R}$ extends by continuity to
a continuous linear mapping (still denoted by $M$) on $B_{A}^{p}%
(\mathbb{R}^{d})$. Furthermore, considered as defined on $B_{A}^{p}%
(\mathbb{R}^{d})$, $M$ extends in a natural way to $\mathcal{B}_{A}%
^{p}(\mathbb{R}^{d})$ as follows: for $u=v+\mathcal{N}\in\mathcal{B}_{A}%
^{p}(\mathbb{R}^{d})$, we set $M(u):=M(v)$; this is well defined since
$M(v)=0$ for any $v\in\mathcal{N}$.
\end{enumerate}

In the current work, we will deal with the concept of \textit{ergodic}
algebras with mean value. A function $u\in\mathcal{B}_{A}^{1}(\mathbb{R}^{d})$
is said to be \textit{invariant} if for any $y\in\mathbb{R}^{d}$, $\left\Vert
u(\cdot+y)-u\right\Vert _{1}=0$. This being so, an algebra with mean value $A$
is ergodic if every invariant function $u$ is constant in $\mathcal{B}_{A}%
^{1}(\mathbb{R}^{d})$, i.e. if $\left\Vert u(\cdot+y)-u\right\Vert _{1}=0$ for
any $y\in\mathbb{R}^{d}$, then $\left\Vert u-c\right\Vert _{1}=0$ where $c$ is
a constant. We assume that all the algebras with mean value used in the sequel
are ergodic.

\begin{remark}
\label{R1}\emph{In the sequel we assume that all algebras wmv consist of
real-valued functions. This does not make any restriction on the preceding
results or on the forthcoming ones. We will denote the Gelfand transform
}$\mathcal{G}(u)$\emph{ of a function }$u\in A$\emph{ by }$\widehat{u}%
$\emph{.}
\end{remark}

\subsection{The Sigma-convergence\label{subsec3.2}}

In what follows, the notations are those of the preceding subsections.

Let $A$ be an algebra wmv on $\mathbb{R}_{y}^{d}$. We know that $A$ is a
subalgebra of the $\mathcal{C}^{\ast}$-algebra of bounded uniformly continuous
functions $\mathrm{BUC}(\mathbb{R}_{y}^{d})$. The generic element of
$\mathbb{R}_{T}^{d}$ is denoted by $(t,x)$ while any function in $A$ is of
variable $y\in\mathbb{R}_{y}^{d}$. For a function $u\in L^{p}(\mathbb{R}%
_{T}^{d}\times V;B_{A}^{p}(\mathbb{R}^{d}))$ we denote by $u(t,x,\cdot,v)$
(for a.e. $(t,x,v)\in\mathbb{R}_{T}^{d}\times V$) the function defined by
\[
u(t,x,\cdot,v)(y)=u(t,x,y,v)\text{ for a.e. }y\in\mathbb{R}^{d}.
\]
Then $u(t,x,\cdot,v)\in B_{A}^{p}(\mathbb{R}^{d})$, so that the mean value of
$u(t,x,\cdot,v)$ is defined accordingly.

\begin{definition}
\label{d2}\emph{A sequence} $(u_{\varepsilon})_{\varepsilon>0}\subset
L^{p}(\mathbb{R}_{T}^{d}\times V)$ $(1\leq p<\infty)$ \emph{is weakly}
sigma-convergent\emph{ in }$L^{p}(\mathbb{R}_{T}^{d}\times V)$ \emph{to some
function } $u_{0}\in L^{p}(\mathbb{R}_{T}^{d}\times V;\mathcal{B}_{A}%
^{p}(\mathbb{R}^{d}))$ \emph{if as }$\varepsilon\rightarrow0$,\emph{ we have
}
\begin{equation}
\iint_{\mathbb{R}_{T}^{d}\times V}u_{\varepsilon}(t,x,v)\varphi^{\varepsilon
}(t,x,v)d\mu(v)dxdt\rightarrow\iint_{\mathbb{R}_{T}^{d}\times V}%
M(u_{0}(t,x,\cdot,v)\varphi(t,x,\cdot,v))d\mu(v)dxdt \label{26}%
\end{equation}
\emph{for every }$\varphi\in L^{p^{\prime}}(\mathbb{R}_{T}^{d}\times
V;A)$\emph{ }$(\frac{1}{p^{\prime}}=1-\frac{1}{p})$\emph{, where }%
$\varphi^{\varepsilon}(t,x,v)=\varphi(t,x,\frac{x}{\varepsilon},v)$. \emph{We
express this by writing }$u_{\varepsilon}\rightarrow u_{0}$ \emph{in }%
$L^{p}(\mathbb{R}_{T}^{d}\times V)$\text{-weak }$\Sigma$.
\end{definition}

In the above definition, if $A=\mathcal{C}_{per}(Y)$ is the algebra of
continuous periodic functions on $Y$ with $Y=(0,1)^{d}$, then (\ref{26}) reads
as
\[
\iint_{\mathbb{R}_{T}^{d}\times V}u_{\varepsilon}(t,x,v)\varphi^{\varepsilon
}(t,x,v)d\mu(v)dxdt\rightarrow\iint_{\mathbb{R}_{T}^{d}\times V\times Y}%
u_{0}(t,x,y,v)\varphi(t,x,y,v)dyd\mu(v)dxdt
\]
where $u_{0}\in L^{p}(\mathbb{R}_{T}^{d}\times V\times Y)$.

\begin{remark}
\label{r2}\emph{One may show as in \cite{22} that, for any }$f\in
L^{p}(\mathbb{R}_{T}^{d}\times V;A)$\emph{, the sequence }$(f^{\varepsilon
})_{\varepsilon>0}$\emph{ weakly }$\Sigma$\emph{-converges towards
}$f+\mathcal{N}$\emph{ in }$L^{p}(\mathbb{R}_{T}^{d}\times V)$\emph{. It can
be shown (see \cite[Corollary 4.1]{22}) that property (\ref{26}) still holds
for }$\varphi\in L^{p^{\prime}}(\mathbb{R}^{d};\mathcal{C}(\mathbb{R}_{T}%
^{d};B_{A}^{p^{\prime},\infty}(\mathbb{R}_{y}^{d})))$\emph{ where }%
$B_{A}^{p^{\prime},\infty}=B_{A}^{p^{\prime}}\cap L^{\infty}(\mathbb{R}%
_{y}^{d})$\emph{ is endowed with the }$L^{\infty}(\mathbb{R}_{y}^{d}%
)$\emph{-norm.}
\end{remark}

In the sequel, the letter $E$ will always denote any ordinary sequence
$(\varepsilon_{n})_{n\in\mathbb{N}}$ of positive real numbers satisfying:
$0<\varepsilon_{n}\leq1$ and $\varepsilon_{n}\rightarrow0$ when $n\rightarrow
\infty$.

The following result and its proof are simple adaptation of its counterpart in
\cite{19, 22, 28}.

\begin{theorem}
\label{Theohom1}Let $1<p<\infty$ and let $(u_{\varepsilon})_{\varepsilon\in
E}\subset L^{p}(\mathbb{R}_{T}^{d}\times V)$ be a bounded sequence. Then there
exist a subsequence $E^{\prime}$ of $E$ and a function $u\in L^{p}%
(\mathbb{R}_{T}^{d}\times V;\mathcal{B}_{A}^{p}(\mathbb{R}^{d}))$ such that
the sequence $(u_{\varepsilon})_{\varepsilon\in E^{\prime}}$ weakly $\Sigma
$-converges in $L^{p}(\mathbb{R}_{T}^{d}\times V)$ to $u$.
\end{theorem}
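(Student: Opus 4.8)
The plan is to obtain the $\Sigma$-limit as the Riesz representative, via the duality $\big(\mathcal{B}_A^p\big)'=\mathcal{B}_A^{p'}$, of a limit functional built from the sequence $(u_\varepsilon)$; this is the functional-analytic argument of \cite{19, 22, 28} transported to the present setting. Write $\Omega=\mathbb{R}_T^d\times V$, equipped with $dt\,dx\,d\mu(v)$, and set $Y=L^{p'}(\Omega;\mathcal{B}_A^{p'}(\mathbb{R}^d))$. I will use freely that, for $1<p<\infty$, the Banach space $\mathcal{B}_A^p(\mathbb{R}^d)$ is reflexive and that its dual is $\mathcal{B}_A^{p'}(\mathbb{R}^d)$ under the pairing $(u,\varphi)\mapsto M(u\varphi)$ (here $u\varphi\in\mathcal{B}_A^1$ by the H\"older inequality for Besicovitch spaces, so that $M(u\varphi)$ is meaningful); these facts are recalled from \cite{MSANGO, 28, Woukeng2}. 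Consequently $Y$ is reflexive with $Y'=L^p(\Omega;\mathcal{B}_A^p(\mathbb{R}^d))$.

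First I would record the \emph{norm-convergence lemma}: for every $\varphi\in L^{p'}(\Omega;A)$ one has $\|\varphi^\varepsilon\|_{L^{p'}(\Omega)}\to\|\varphi\|_{Y}$ as $\varepsilon\to0$. Indeed, as recalled in Subsection \ref{subsec3.1}, the function $y\mapsto|\varphi(t,x,y,v)|^{p'}$ belongs to $A$ for a.e. $(t,x,v)$, so writing $\|\varphi^\varepsilon\|_{L^{p'}(\Omega)}^{p'}=\iint_\Omega\big(|\varphi|^{p'}\big)^\varepsilon\,d\mu\,dx\,dt$ and passing to the limit through the weak-$\ast$ convergence $\big(|\varphi|^{p'}\big)^\varepsilon\to M\big(|\varphi|^{p'}\big)$ in $L^\infty$ (the non-finiteness of $\mathbb{R}_x^d$ being absorbed by the $L^{p'}$-integrability of $\varphi$) yields the claim. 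The same computation gives the a priori bound $|\varphi^\varepsilon(t,x,v)|\le\|\varphi(t,x,\cdot,v)\|_{L^\infty(\mathbb{R}_y^d)}$, whence $\|\varphi^\varepsilon\|_{L^{p'}(\Omega)}\le\|\varphi\|_{L^{p'}(\Omega;L^\infty(\mathbb{R}_y^d))}$ for every $\varepsilon$.

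Next, for each $\varepsilon\in E$ I define the linear functional $L_\varepsilon(\varphi)=\iint_\Omega u_\varepsilon\varphi^\varepsilon\,d\mu\,dx\,dt$ on $L^{p'}(\Omega;A)$. By H\"older's inequality and the boundedness hypothesis, $|L_\varepsilon(\varphi)|\le\|u_\varepsilon\|_{L^p(\Omega)}\,\|\varphi^\varepsilon\|_{L^{p'}(\Omega)}\le C\,\|\varphi\|_{L^{p'}(\Omega;L^\infty(\mathbb{R}_y^d))}$, so the $L_\varepsilon$ are uniformly bounded. Relying on the compactness apparatus of \cite{19, 22, 28} I extract a subsequence $E'$ along which $L_\varepsilon(\varphi)$ converges for every test function; setting $L(\varphi)=\lim_{E'\ni\varepsilon\to0}L_\varepsilon(\varphi)$ and invoking the norm-convergence lemma in the form of the equicontinuity estimate $\limsup_\varepsilon|L_\varepsilon(\varphi)-L_\varepsilon(\psi)|\le C\,\|\varphi-\psi\|_{Y}$, I see that $L$ depends on $\varphi$ only through its class in $Y$ and satisfies $|L(\varphi)|\le C\,\|\varphi\|_{Y}$. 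Hence $L$ extends to a bounded linear functional on $Y$; in particular it annihilates $\mathcal{N}$, which is exactly why the limit lives in the quotient space $L^p(\Omega;\mathcal{B}_A^p)$ rather than in $L^p(\Omega;B_A^p)$.

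Finally I invoke $Y'=L^p(\Omega;\mathcal{B}_A^p)$ to represent $L$ by a unique $u\in L^p(\Omega;\mathcal{B}_A^p)$, namely $L(\varphi)=\iint_\Omega M\big(u(t,x,\cdot,v)\varphi(t,x,\cdot,v)\big)\,d\mu(v)\,dx\,dt$; comparison with Definition \ref{d2} shows that $u_\varepsilon\to u$ in $L^p(\Omega)$-weak $\Sigma$ along $E'$, which is the assertion. The main obstacle is not the bookkeeping above but the two structural facts borrowed from \cite{MSANGO, 28, Woukeng2}: the reflexivity and self-duality of the Besicovitch--Banach spaces $\mathcal{B}_A^p$ under the mean-value pairing (this is where $1<p<\infty$ is essential), and the attendant duality $Y'=L^p(\Omega;\mathcal{B}_A^p)$ for the vector-valued Lebesgue spaces. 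A secondary delicate point is the sequential extraction, where one must pass from the weak-$\ast$ compactness of the uniformly bounded family $(L_\varepsilon)$ to an honest subsequence; this is handled as in \cite{19, 22, 28}.
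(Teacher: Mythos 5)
Your proposal is correct and follows essentially the same route as the paper, which gives no proof of its own but declares the result a ``simple adaptation'' of the compactness theorems in \cite{19, 22, 28}; those references argue exactly by your scheme (uniform boundedness of the functionals $L_{\varepsilon}$, the norm convergence $\Vert\varphi^{\varepsilon}\Vert_{L^{p^{\prime}}}\rightarrow\Vert\varphi\Vert_{L^{p^{\prime}}(\Omega;\mathcal{B}_{A}^{p^{\prime}})}$, subsequence extraction, and Riesz representation via the duality $(\mathcal{B}_{A}^{p^{\prime}})^{\prime}=\mathcal{B}_{A}^{p}$). The one step you defer that genuinely needs care is the extraction itself, which requires a countable total family of test functions (separability of $A$ or an equivalent device), but that is precisely the same debt the paper incurs by citing the same sources.
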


\section{Corrector results\label{sec4}}

In this section we aim at stating and proving an important result based on
Krein-Rutman theorem \cite[Theorem VI.13, p. 100]{brezis}. It is very
important to note that the Krein-Rutman theorem was first stated and proved by
Krein and Rutman \cite{KR}. A more general version of that theorem appeared in
Bonsall \cite{Bo} and was extended to locally convex topological spaces by
Schaefer \cite{Schaefer}. Since the space $L^{2}(V;B_{A}^{2}(\mathbb{R}^{d}))$
is rather a complete locally convex topological vector space, we shall follow
in this subsection the lines of \cite{Schaefer}. However, we can observe that
the results obtained here can be extended to the original Krein-Rutman setting
of Banach spaces by considering the Hilbert space $L^{2}(V;\mathcal{B}_{A}%
^{2}(\mathbb{R}^{d}))$ which is the Banach counterpart of $L^{2}(V;B_{A}%
^{2}(\mathbb{R}^{d}))$.

To start with, we consider the unbounded operator $P$ defined on
$L^{2}(V;B_{A}^{2}(\mathbb{R}^{d}))$ by
\begin{equation}
Pu=a(v)\cdot\nabla_{y}u-\mathcal{Q}u \label{eq1}%
\end{equation}
with domain
\[
D(P)=\{u\in L^{2}(V;B_{A}^{2}(\mathbb{R}_{y}^{d})):Pu\in L^{2}(V;B_{A}%
^{2}(\mathbb{R}_{y}^{d}))\}
\]
where $\mathcal{Q}$ is the operator defined by
\begin{equation}
\mathcal{Q}f(x,y,v)=\int_{V}\sigma(x,y,v,w)(f(y,w)-f(y,v))d\mu(w),\ f\in
L^{2}(V;B_{A}^{2}(\mathbb{R}_{y}^{d})) \label{OpQ}%
\end{equation}
and the equality (\ref{eq1}) above holds in the weak sense in $L^{2}%
(V;B_{A}^{2}(\mathbb{R}_{y}^{d}))$, that is
\begin{equation}
(Pu,\phi)=\int_{V}M(-ua(v)\cdot\nabla_{y}\phi-\phi\mathcal{Q}u)d\mu(v)\text{
for all }\phi\in\mathcal{C}^{\infty}(V;A^{\infty}). \label{eq2}%
\end{equation}
Using the semi-detailed balance condition (\ref{sdbc}) we easily show that
\begin{equation}
\int_{V}(\phi\mathcal{Q}u)d\mu(v)=\int_{V}(u\mathcal{Q}^{\ast}\phi)d\mu(v)
\label{intvuLw}%
\end{equation}
where $\mathcal{Q}^{\ast}$ is the adjoint collision operator of $\mathcal{Q}$
defined by
\begin{equation}
\mathcal{Q}^{\ast}\phi(x,y,v)=\int_{V}\sigma(x,y,w,v)(\phi(y,w)-\phi
(y,v))d\mu(w), \label{adjOpQ}%
\end{equation}
so that (\ref{eq2}) is equivalent to
\begin{equation}
(Pu,\phi)=\int_{V}M(u(-a(v)\cdot\nabla_{y}\phi-\mathcal{Q}^{\ast}\phi
))d\mu(v)\text{ for all }\phi\in\mathcal{C}^{\infty}(V;A^{\infty}).
\label{eq3}%
\end{equation}
We infer from (\ref{eq3}) that the adjoint $P^{\ast}$ of $P$ is well defined
by
\begin{equation}
P^{\ast}\phi=-a(v)\cdot\nabla_{y}\phi-\mathcal{Q}^{\ast}\phi. \label{eq3'}%
\end{equation}
Next, if $u\in L^{2}(V;\mathcal{N})\cap D(P)$ ($\mathcal{N}=\{f\in B_{A}%
^{2}(\mathbb{R}_{y}^{d})):M(\left\vert f\right\vert ^{2})=0\}$) then $Pu=0$,
that is
\begin{equation}
(Pu,\phi)=0\text{ for all }\phi\in\mathcal{C}^{\infty}(V;A^{\infty}).
\label{eq4}%
\end{equation}
This shows that $P$ can be extended on the Banach space $L^{2}(V;\mathcal{B}%
_{A}^{2}(\mathbb{R}_{y}^{d}))$ by
\begin{equation}
\mathcal{P}\widetilde{u}=\widetilde{Pu}\text{ for }=u+\mathcal{N}\text{ with
}u\in L^{2}(V;B_{A}^{2}(\mathbb{R}_{y}^{d})) \label{eq5}%
\end{equation}
where $\widetilde{Pu}=Pu+\mathcal{N}$. Indeed in view of (\ref{eq4}), if
$\widetilde{u}=\widetilde{w}$ then $u-w\in\mathcal{N}$, so that $P(u-w)=0$,
that is, $\mathcal{P}\widetilde{u}=\mathcal{P}\widetilde{w}$. The domain of
$\mathcal{P}$ is naturally defined by
\[
D(\mathcal{P})=\{\widetilde{u}\in L^{2}(V;\mathcal{B}_{A}^{2}(\mathbb{R}%
_{y}^{d})):\mathcal{P}\widetilde{u}\in L^{2}(V;\mathcal{B}_{A}^{2}%
(\mathbb{R}_{y}^{d}))\}.
\]
With the above definitions of $P$ and $\mathcal{P}$, in order to study the
properties of $\mathcal{P}$, it will be sufficient to study those of $P$ based
on the framework developed in \cite{Schaefer}.

The Proposition \ref{propkers} below is a generalization of its counterpart in
\cite[Section 3]{Goudon-Mellet} where the periodic setting is treated by
considering the usual Krein-Rutman theorem using Banach spaces theory. Here we
consider the general deterministic framework which does not fall within the
scope of the usual Krein-Rutman theorem in Banach spaces as in \cite[Section
3]{Goudon-Mellet}, but rather within the scope of its generalization to
locally convex spaces. However as seen in \cite{Schaefer}, the assumptions in
applying the results from \cite{Schaefer} are the same as those from the
Banach setting. We start with the description of the kernel of the operators
$P$ and $P^{\ast}$ (the estimates obtained here are uniform with respect to
the spatial macroscopic variable $x\in\mathbb{R}^{d}$). The overall aim of
this subsection is to solve the auxiliary problems (\ref{eq6}) and (\ref{eq7}) below:

\begin{itemize}
\item the corrector problem:
\begin{equation}
\text{For }g\in L^{2}(V;B_{A}^{2}(\mathbb{R}_{y}^{d}))\text{, look for }f\in
D(P)\text{ such that }Pf=g\text{ } \label{eq6}%
\end{equation}
and

\item the dual corrector problem:
\begin{equation}
\text{For }\varphi\in L^{2}(V;B_{A}^{2}(\mathbb{R}_{y}^{d}))\text{, look for
}\phi\in D(P^{\ast})\text{ such that }P^{\ast}\phi=\varphi\text{.} \label{eq7}%
\end{equation}
We recall that equalities (\ref{eq6}) and (\ref{eq7}) hold algebraically and
so, also hold in the usual sense of distributions in $\mathbb{R}^{d}\times V$.
It will also be seen below that they will hold in the weak sense in
$L^{2}(V;B_{A}^{2}(\mathbb{R}_{y}^{d}))$ (see especially the proof of part (i)
of the proposition below).
\end{itemize}

\begin{proposition}
\label{propkers}Suppose \emph{(\ref{H1})} holds true and that $\sigma
=\sigma(x,\cdot,\cdot,\cdot)\in L^{\infty}(V\times V;B_{A}^{2,\infty
}(\mathbb{R}_{y}^{d}))$. Then the following assertions hold:

\begin{itemize}
\item[(i)] There exists a unique function $F\in L^{2}(V;B_{A}^{2}%
(\mathbb{R}_{y}^{d}))$ satisfying
\begin{equation}
PF=0,\ \int_{V}M(F)d\mu(v)=1\text{ and}\ F>0\text{ a.e. in }\mathbb{R}_{y}%
^{d}\times V. \label{KerP}%
\end{equation}
Furthermore $F$ is the unique solution of the variational equation
\begin{equation}
\int_{V}M(F(a(v)\cdot\nabla_{y}\phi(\cdot,v)+\mathcal{Q}^{\ast}\phi
(\cdot,v)))d\mu(v)=0\text{ for all }\phi\in C^{\infty}(V;A^{\infty}).
\label{eq17}%
\end{equation}
Similarly, we have $\ker P^{\ast}=\mathrm{span}\{\chi_{\mathbb{R}_{y}%
^{d}\times V}\}$, where $\chi_{\mathbb{R}_{y}^{d}\times V}$ stands for the
characteristic function of $\mathbb{R}_{y}^{d}\times V$.

\item[(ii)] Let $g\in L^{2}(V;B_{A}^{2}(\mathbb{R}_{y}^{d}))$. The equation
\emph{(\ref{eq6})} admits a unique $f$ in $D(P)$ solution if and only if
$\int_{V}M(g)d\mu(v)=0$. Moreover it holds that
\begin{equation}
\left\Vert f\right\Vert _{L^{2}(V;B_{A}^{2}(\mathbb{R}_{y}^{d}))}\leq
C\left\Vert g\right\Vert _{L^{2}(V;B_{A}^{2}(\mathbb{R}_{y}^{d}))}
\label{bornfg}%
\end{equation}
where $C>0$ is a constant independent of $g$.

\item[(iii)] Let $\varphi\in L^{2}(V;B_{A}^{2}(\mathbb{R}_{y}^{d}))$. The
equation \emph{(\ref{eq7})} admits a solution $\phi\in D(P^{\ast})$ if and
only if $\int_{V}M(\varphi F)d\mu(v)=0$. The condition $\int_{V}M(\phi
)d\mu(v)=0$ ensures the uniqueness of the solution. Furthermore we have
\begin{equation}
\left\Vert \phi\right\Vert _{L^{2}(V;B_{A}^{2}(\mathbb{R}_{y}^{d}))}\leq
C\left\Vert \varphi\right\Vert _{L^{2}(V;B_{A}^{2}(\mathbb{R}_{y}^{d}))}
\label{bornpvp}%
\end{equation}
where $C>0$ is a constant independent of $\varphi$.
\end{itemize}
\end{proposition}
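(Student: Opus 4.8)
The plan is to transfer the whole analysis to the Hilbert space $\mathcal{H}=L^{2}(V;\mathcal{B}_{A}^{2}(\mathbb{R}_{y}^{d}))$, equipped with the inner product $\langle u,w\rangle=\int_{V}M(uw)\,d\mu(v)$, and to work with the extended operator $\mathcal{P}$; since solving $Pf=g$ in $L^{2}(V;B_{A}^{2})$ is, modulo $\mathcal{N}$, equivalent to solving the corresponding equation for $\mathcal{P}$ in $\mathcal{H}$, every conclusion obtained on $\mathcal{H}$ lifts back to the stated form. I would split $P=L-\mathcal{K}$, where $Lu=a(v)\cdot\nabla_{y}u+\Sigma u$ is transport plus the collision frequency $\Sigma(x,y,v)=\int_{V}\sigma(x,y,v,w)\,d\mu(w)$ and $\mathcal{K}u=\int_{V}\sigma(x,y,v,w)u(\cdot,w)\,d\mu(w)$ is the (positive) gain operator. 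Because $M$ annihilates $y$-derivatives, the transport part is skew-symmetric for $\langle\cdot,\cdot\rangle$, so $\langle(\lambda+L)u,u\rangle\geq\lambda\|u\|^{2}$ and $\lambda+L$ is boundedly invertible for every $\lambda>0$ by Lax--Milgram; expanding $(\lambda+P)^{-1}$ through the Neumann series $\sum_{n\geq0}((\lambda+L)^{-1}\mathcal{K})^{n}(\lambda+L)^{-1}$ exhibits it as positivity-preserving, since $(\lambda+L)^{-1}$ preserves positivity (transport with positive absorption) and $\mathcal{K}\geq0$.

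For (i) I would apply Schaefer's locally convex Krein--Rutman theorem to the positive operator $R_{\lambda}:=(\lambda+P)^{-1}$: its spectral radius $\rho$ is an eigenvalue carried by a nonnegative eigenvector $F$, and $\rho$ is likewise an eigenvalue of $R_{\lambda}^{\ast}=(\lambda+P^{\ast})^{-1}$ with a nonnegative eigenvector. The decisive point is that the adjoint eigenvector is explicit: the semi-detailed balance condition (\ref{sdbc}) together with (\ref{tdnc}) gives $\mathcal{Q}^{\ast}1=0$, hence $P^{\ast}1=0$ and $R_{\lambda}^{\ast}1=\lambda^{-1}1$. Pairing the Krein--Rutman eigenvector against $1$ yields $\rho\langle F,1\rangle=\langle R_{\lambda}F,1\rangle=\langle F,R_{\lambda}^{\ast}1\rangle=\lambda^{-1}\langle F,1\rangle$, and since $F\geq0$, $F\not\equiv0$ force $\langle F,1\rangle=\int_{V}M(F)\,d\mu>0$, this forces $\rho=\lambda^{-1}$, i.e. $PF=(\rho^{-1}-\lambda)F=0$. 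Normalising by $\int_{V}M(F)\,d\mu=1$ and invoking the irreducibility of $R_{\lambda}$ (strict positivity of the kernel built from $\sigma$) gives the strict positivity $F>0$ and the simplicity of $\rho$, whence $\ker P=\mathbb{R}F$ and $\ker P^{\ast}=\mathrm{span}\{1\}$. The variational identity (\ref{eq17}) is then simply the weak form $(PF,\phi)=0$ read off from (\ref{eq3}).

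For (ii) and (iii) I would deduce the compatibility conditions from the Fredholm alternative for $P$ (respectively $P^{\ast}$), once $P$ is known to be a Fredholm operator of index zero with closed range. Then $\mathrm{ran}\,P=(\ker P^{\ast})^{\perp}=\{g:\langle g,1\rangle=0\}=\{g:\int_{V}M(g)\,d\mu=0\}$, which is exactly the condition in (ii); dually $\mathrm{ran}\,P^{\ast}=(\ker P)^{\perp}=\{\varphi:\int_{V}M(\varphi F)\,d\mu=0\}$, the condition in (iii). Uniqueness is obtained by selecting the representative orthogonal to the one-dimensional kernel: for (iii) this is precisely the normalisation $\int_{V}M(\phi)\,d\mu=\langle\phi,1\rangle=0$, and an analogous normalisation fixes $f$ in (ii). The a priori bounds (\ref{bornfg}) and (\ref{bornpvp}) then follow from the bounded invertibility of the restriction $P:(\ker P)^{\perp}\to(\ker P^{\ast})^{\perp}$ via the open mapping theorem.

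I expect the main obstacle to be the verification of the hypotheses of Schaefer's theorem and of the Fredholm alternative in the Besicovitch setting, where, unlike the periodic case on $L^{2}(Y\times V)$, the space $\mathcal{B}_{A}^{2}(\mathbb{R}_{y}^{d})$ provides no compactness in the $y$-variable. The role of assumption (\ref{H1}), in particular the non-degeneracy estimate $\mu(\{v:|a(v)\cdot\xi|\leq h\})\leq Ch^{\gamma}$, is to supply the velocity-averaging and irreducibility needed to replace compactness and to ensure that $\rho$ is a simple dominant eigenvalue. Making this rigorous within the locally convex framework of \cite{Schaefer}, and checking that the positivity and maximum-principle arguments survive the passage to the quotient by $\mathcal{N}$, is the technical heart of the proof.
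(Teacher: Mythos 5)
Your overall architecture (split off the gain operator, get positivity from transport with absorption, invoke Schaefer's locally convex Krein--Rutman for part (i), then the Fredholm alternative plus the open mapping theorem for (ii)--(iii)) matches the paper's, and your identification of the eigenvalue by pairing against the constant function $1\in\ker P^{\ast}$ is essentially the paper's computation showing the spectral radius equals $1$. But there is a genuine gap at the decisive step: you apply Krein--Rutman to the resolvent $R_{\lambda}=(\lambda+P)^{-1}$, and Schaefer's theorem, like the classical Krein--Rutman theorem, requires compactness of the operator. Your own Neumann series $R_{\lambda}=\sum_{n\geq0}\bigl((\lambda+L)^{-1}\mathcal{K}\bigr)^{n}(\lambda+L)^{-1}$ shows why this fails: the $n=0$ term is the free-streaming resolvent $(\lambda+L)^{-1}$, which has no smoothing in $y$ on $B_{A}^{2}(\mathbb{R}_{y}^{d})$ and is not compact, so $R_{\lambda}$ cannot be compact. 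You acknowledge the missing compactness at the end and hope that the non-degeneracy condition in (\ref{H1}) supplies it via velocity averaging, but no averaging lemma is proved (or available off the shelf) in the Besicovitch setting, so as written the application of Schaefer's theorem is unjustified. The same gap reappears in (ii)--(iii), where you assume without proof that $P$ is Fredholm of index zero with closed range.

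The paper's resolution is to apply Krein--Rutman not to the resolvent of $P$ but to $\mathcal{O}=K\circ\mathcal{A}^{-1}$, where $\mathcal{A}=a(v)\cdot\nabla_{y}+\Sigma$ is your $L$. The point is that compactness of $\mathcal{O}$ needs only the compactness of the gain operator $K$, which is an integral operator over the compact velocity space $(V,d\mu)$ with kernel $\sigma$; no compactness in $y$ is required, and (\ref{H1}) plays no averaging role here. The eigenvalue problem $\mathcal{O}H=H$ is then equivalent to $PF=0$ via $F=\mathcal{A}^{-1}H$, $H=\mathcal{A}F$, and the semi-detailed balance condition forces the spectral radius to be $1$, exactly parallel to your pairing argument. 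The Fredholm structure also comes for free from this factorization: $Pf=g$ is equivalent to $(I-\mathcal{O})h=g$ with $h=\mathcal{A}f$, and $I-\mathcal{O}$ is a compact perturbation of the identity, hence Fredholm of index zero, which yields $\operatorname{Im}P=(\ker P^{\ast})^{\perp}$ and, with the open mapping theorem on the quotient space $L^{2}(V;\mathcal{B}_{A}^{2}(\mathbb{R}_{y}^{d}))$, the bounds (\ref{bornfg}) and (\ref{bornpvp}). To repair your proof you should redirect both the Krein--Rutman and Fredholm arguments to $I-K\circ\mathcal{A}^{-1}$ rather than to $(\lambda+P)^{-1}$ and $P$ directly.
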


\begin{proof}
The proof is divided into 5 steps. It is based on the characterization of
$\ker P$, after which we conclude with Fredholm alternative argument
\cite[Theorem VI.6, page 92]{brezis}).

\textit{Step 1: Uniform bounds of the operator }$\mathcal{Q}$.

Let us first and foremost check that $\mathcal{Q}$ is a bounded operator on
$L^{2}(V;B_{A}^{2}(\mathbb{R}_{y}^{d}))$, provided that $\sigma\in L^{\infty
}(V\times V;B_{A}^{2,\infty}(\mathbb{R}_{y}^{d}))$. Indeed for $f\in
L^{2}(V;B_{A}^{2}(\mathbb{R}_{y}^{d}))$, we have
\[
\mathcal{Q}(f)(y,v)=\int_{V}\sigma(y,v,w)(f(y,w)-f(y,v))d\mu(w),
\]
so that
\[
\left\Vert \mathcal{Q}(f)\right\Vert _{L^{2}(V;B_{A}^{2}(\mathbb{R}_{y}^{d}%
))}^{2}=\int_{V}M\left(  \left\vert \mathcal{Q}(f)(\cdot,v)\right\vert
^{2}\right)  d\mu(v).
\]
But
\[
\left\vert \mathcal{Q}(f)(\cdot,v)\right\vert ^{2}\leq C\left\Vert
\sigma(\cdot,v,\cdot)\right\Vert _{L^{\infty}(V\times\mathbb{R}_{y}^{d})}%
^{2}\left(  \int_{V}\left\vert f\right\vert ^{2}d\mu(w)+\left\vert
f\right\vert ^{2}\right)
\]
where $C=\sup(\mu(V),\mu(V)^{2})$. Since the mean value commutes with the
integral, we get
\[
\left\Vert \mathcal{Q}(f)\right\Vert _{L^{2}(V;B_{A}^{2}(\mathbb{R}_{y}^{d}%
))}^{2}\leq C\left\Vert \sigma\right\Vert _{L^{\infty}(V\times V\times
\mathbb{R}_{y}^{d})}^{2}(\mu(V)+1)\left\Vert f\right\Vert _{L^{2}(V;B_{A}%
^{2}(\mathbb{R}_{y}^{d}))}^{2},
\]
that is,
\begin{equation}
\left\Vert \mathcal{Q}(f)\right\Vert _{L^{2}(V;B_{A}^{2}(\mathbb{R}_{y}^{d}%
))}\leq C\left\Vert \sigma\right\Vert _{L^{\infty}(V\times V\times
\mathbb{R}_{y}^{d})}\left\Vert f\right\Vert _{L^{2}(V;B_{A}^{2}(\mathbb{R}%
_{y}^{d}))} \label{BoundQf}%
\end{equation}
where $C$ in (\ref{BoundQf}) depends only on $\mu(V)$. This shows that
$\mathcal{Q}$ is bounded in $L^{2}(V;B_{A}^{2}(\mathbb{R}_{y}^{d}))$.\medskip

\textit{Step 2: Perturbation of the operator }$P$\textbf{.}

We rewrite $P$ as a perturbation of the advection operator $\mathcal{A}%
=a(v)\cdot\nabla_{y}+\Sigma$ (where $\Sigma(y,v)=\int_{V}\sigma(y,w,v)d\mu
(w)$), by the integral operator $K$ defined by
\begin{equation}
Kf(y,v)=\int_{V}\sigma(y,v,w)f(y,w)d\mu(w), \label{Kf}%
\end{equation}
that is,
\begin{equation}
P=\mathcal{A}-K. \label{TAK}%
\end{equation}
$\mathcal{A}$ is invertible; since by a simple integration along the
characteristic lines $y+sa(v)$, we can easily check that the equation
$\mathcal{A}f=h$ has the unique solution
\begin{equation}
f(y,v)=\mathcal{A}^{-1}h(y,v)=\int_{0}^{\infty}\exp\left(  -\int_{0}^{s}%
\Sigma(y-a(v)\tau,v)d\tau\right)  h(y-a(v)s,v)ds, \label{fAh}%
\end{equation}
and the above function $f$ actually lies in $L^{2}(V;B_{A}^{2}(\mathbb{R}%
_{y}^{d}))$. Indeed, thanks to hypothesis (\ref{H2}), we can find
$0<\Sigma_{1}=\inf_{(x,y,v)\in\mathbb{R}_{x}^{d}\times\mathbb{R}_{y}^{d}\times
V}\Sigma(x,y,v)$ (recall that $\Sigma(x,y,v)>0$ since $\sigma>0$), so that for
$h\in L^{2}(V;B_{A}^{2}(\mathbb{R}_{y}^{d}))$, we have
\begin{equation}%
\begin{array}
[c]{l}%
\Vert\mathcal{A}^{-1}h\Vert_{L^{2}(V;B_{A}^{2}(\mathbb{R}_{y}^{d}))}^{2}=\\
=\int_{V}\lim_{R\rightarrow\infty}%
\mathchoice {{\setbox0=\hbox{$\displaystyle{\textstyle
-}{\int}$ } \vcenter{\hbox{$\textstyle -$
}}\kern-.6\wd0}}{{\setbox0=\hbox{$\textstyle{\scriptstyle -}{\int}$ } \vcenter{\hbox{$\scriptstyle -$
}}\kern-.6\wd0}}{{\setbox0=\hbox{$\scriptstyle{\scriptscriptstyle -}{\int}$
} \vcenter{\hbox{$\scriptscriptstyle -$
}}\kern-.6\wd0}}{{\setbox0=\hbox{$\scriptscriptstyle{\scriptscriptstyle
-}{\int}$ } \vcenter{\hbox{$\scriptscriptstyle -$ }}\kern-.6\wd0}}\!\int
_{B_{R}}\left\vert \int_{0}^{\infty}\exp\left(  -\int_{0}^{s}\Sigma
(y-a(v)\tau,v)d\tau\right)  h(y-a(v)s,v)ds\right\vert ^{2}dyd\mu(v)\\
\leq\int_{V}\lim_{R\rightarrow\infty}%
\mathchoice {{\setbox0=\hbox{$\displaystyle{\textstyle
-}{\int}$ } \vcenter{\hbox{$\textstyle -$
}}\kern-.6\wd0}}{{\setbox0=\hbox{$\textstyle{\scriptstyle -}{\int}$ } \vcenter{\hbox{$\scriptstyle -$
}}\kern-.6\wd0}}{{\setbox0=\hbox{$\scriptstyle{\scriptscriptstyle -}{\int}$
} \vcenter{\hbox{$\scriptscriptstyle -$
}}\kern-.6\wd0}}{{\setbox0=\hbox{$\scriptscriptstyle{\scriptscriptstyle
-}{\int}$ } \vcenter{\hbox{$\scriptscriptstyle -$ }}\kern-.6\wd0}}\!\int
_{B_{R}}\left\vert \int_{0}^{\infty}\exp\left(  -\Sigma_{1}s\right)
h(y-a(v)s,v)ds\right\vert ^{2}dyd\mu(v)\\
=\int_{V}\lim_{R\rightarrow\infty}%
\mathchoice {{\setbox0=\hbox{$\displaystyle{\textstyle
-}{\int}$ } \vcenter{\hbox{$\textstyle -$
}}\kern-.6\wd0}}{{\setbox0=\hbox{$\textstyle{\scriptstyle -}{\int}$ } \vcenter{\hbox{$\scriptstyle -$
}}\kern-.6\wd0}}{{\setbox0=\hbox{$\scriptstyle{\scriptscriptstyle -}{\int}$
} \vcenter{\hbox{$\scriptscriptstyle -$
}}\kern-.6\wd0}}{{\setbox0=\hbox{$\scriptscriptstyle{\scriptscriptstyle
-}{\int}$ } \vcenter{\hbox{$\scriptscriptstyle -$ }}\kern-.6\wd0}}\!\int
_{B_{R}}\left\vert \int_{0}^{\infty}\exp\left(  -\frac{\Sigma_{1}}{2}s\right)
\exp\left(  -\frac{\Sigma_{1}}{2}s\right)  h(y-a(v)s,v)ds\right\vert
^{2}dyd\mu(v)\\
\leq\int_{V}\lim_{R\rightarrow\infty}%
\mathchoice {{\setbox0=\hbox{$\displaystyle{\textstyle
-}{\int}$ } \vcenter{\hbox{$\textstyle -$
}}\kern-.6\wd0}}{{\setbox0=\hbox{$\textstyle{\scriptstyle -}{\int}$ } \vcenter{\hbox{$\scriptstyle -$
}}\kern-.6\wd0}}{{\setbox0=\hbox{$\scriptstyle{\scriptscriptstyle -}{\int}$
} \vcenter{\hbox{$\scriptscriptstyle -$
}}\kern-.6\wd0}}{{\setbox0=\hbox{$\scriptscriptstyle{\scriptscriptstyle
-}{\int}$ } \vcenter{\hbox{$\scriptscriptstyle -$ }}\kern-.6\wd0}}\!\int
_{B_{R}}\left(  \int_{0}^{\infty}\exp\left(  -\Sigma_{1}s\right)  ds\right)
\left(  \int_{0}^{\infty}\exp\left(  -\Sigma_{1}s\right)  |h(y-a(v)s,v)|^{2}%
ds\right)  dyd\mu(v)\\
=\left(  \int_{0}^{\infty}\exp\left(  -\Sigma_{1}s\right)  ds\right)  \int
_{V}\int_{0}^{\infty}\exp\left(  -\Sigma_{1}s\right)  \left(  \lim
_{R\rightarrow\infty}\mathchoice {{\setbox0=\hbox{$\displaystyle{\textstyle
-}{\int}$ } \vcenter{\hbox{$\textstyle -$
}}\kern-.6\wd0}}{{\setbox0=\hbox{$\textstyle{\scriptstyle -}{\int}$ } \vcenter{\hbox{$\scriptstyle -$
}}\kern-.6\wd0}}{{\setbox0=\hbox{$\scriptstyle{\scriptscriptstyle -}{\int}$
} \vcenter{\hbox{$\scriptscriptstyle -$
}}\kern-.6\wd0}}{{\setbox0=\hbox{$\scriptscriptstyle{\scriptscriptstyle
-}{\int}$ } \vcenter{\hbox{$\scriptscriptstyle -$ }}\kern-.6\wd0}}\!\int
_{B_{R}}|h(y-a(v)s,v)|^{2}dy\right)  dsd\mu(v)\\
=\frac{1}{\Sigma_{1}}\int_{V}\int_{0}^{\infty}\exp\left(  -\Sigma_{1}s\right)
M\left(  |\tau_{a(v)s}h(\cdot,v)|^{2}\right)  d\mu(v)ds\\
=\frac{1}{\Sigma_{1}}\int_{V}\int_{0}^{\infty}\exp\left(  -\Sigma_{1}s\right)
M\left(  |h(\cdot,v)|^{2}\right)  d\mu(v)ds\\
=\frac{1}{\Sigma_{1}}\int_{V}\int_{0}^{\infty}\exp\left(  -\Sigma_{1}s\right)
M\left(  |h(\cdot,v)|^{2}\right)  d\mu(v)ds\\
=\left(  \frac{1}{\Sigma_{1}}\right)  ^{2}\left\Vert h\right\Vert
_{L^{2}(V;B_{A}^{2}(\mathbb{R}_{y}^{d}))}^{2}=\left(  \frac{1}{\Sigma_{1}%
}\right)  ^{2}\left\Vert h\right\Vert _{L^{2}(V;B_{A}^{2}(\mathbb{R}_{y}%
^{d}))}^{2}.
\end{array}
\label{bornAm1}%
\end{equation}

On the other hand, since $h\geq0$ (resp. $h>0$) implies $f=\mathcal{A}%
^{-1}h\geq0$ (resp. $f=\mathcal{A}^{-1}h>0$), we deduce that $\mathcal{A}%
^{-1}$ is a non negative bounded linear operator on $L^{2}(V;B_{A}%
^{2}(\mathbb{R}_{y}^{d}))$. Now, we rewrite the equation $Pf=g$ as follows:
\begin{equation}
(I-K\circ\mathcal{A}^{-1})h=g,\ h=\mathcal{A}f \label{rewpb}%
\end{equation}
where $I$ is the identity operator in $L^{2}(V;B_{A}^{2}(\mathbb{R}_{y}^{d}%
))$.\medskip

\textit{Step 3: Compactness of }$\mathcal{O}=K\circ\mathcal{A}^{-1}$\textbf{.}

We need to show that under the hypotheses (\ref{H1}) and (\ref{H2}), the
operator $\mathcal{O}=K\circ\mathcal{A}^{-1}$ is compact on $L^{2}(V;B_{A}%
^{2}(\mathbb{R}_{y}^{d}))$. To that end, let us first notice that in view of
(\ref{bornAm1}), $\mathcal{A}^{-1}$ is a bounded linear operator defined from
$L^{2}(V;B_{A}^{2}(\mathbb{R}_{y}^{d}))$ onto itself. It is also a fact that
the operator $K$ sends continuously $L^{2}(V;B_{A}^{2}(\mathbb{R}_{y}^{d}))$
into itself. Thus, following the idea of \cite[Theorem VI.12 (c)]{REEDSIM}, it
suffices to show that $K$ or $\mathcal{A}^{-1}$ is compact. So, let us show
that $K$ is a compact linear operator. To proceed with, let $(f_{n}%
)_{n\in\mathbb{N}}$ be a bounded sequence in $L^{2}(V;B_{A}^{2}(\mathbb{R}%
_{y}^{d}))$. Without losing the generality, we suppose that $(f_{n}%
)_{n\in\mathbb{N}}$ is weakly convergent to some $f\in L^{2}(V;B_{A}%
^{2}(\mathbb{R}_{y}^{d}))$.

We now show that $(Kf_{n})_{n\in\mathbb{N}}$ strongly converges to $Kf\in
L^{2}(V;B_{A}^{2}(\mathbb{R}_{y}^{d}))$ $i.e.$
\[
\lim_{n\rightarrow\infty}\left\Vert Kf_{n}-Kf\right\Vert _{L^{2}(V;B_{A}%
^{2}(\mathbb{R}_{y}^{d}))}^{2}=0.
\]
Under assumption (\ref{H2}) we have, for a.e. $(y,v)\in\mathbb{R}_{y}%
^{d}\times V$,
\[
\int_{V}\left\vert \sigma(y,v,w)\right\vert ^{2}d\mu(w)\leq\mu(V)\left\Vert
\sigma(y,v,\cdot)\right\Vert _{L^{\infty}(V)}^{2}.
\]
Hence for such $(y,v)$, one has
\begin{align}
\lim_{n\rightarrow\infty}Kf_{n}  &  =\lim_{n\rightarrow\infty}\int_{V}%
\sigma(y,v,w)f_{n}(y,w)d\mu(w)\nonumber\\
&  =\lim_{n\rightarrow\infty}\left\langle \sigma(y,v,\cdot),f_{n}%
(y,\cdot)\right\rangle \nonumber\\
&  =\left\langle \sigma(y,v,\cdot),f(y,\cdot)\right\rangle \nonumber\\
&  =\int_{V}\sigma(y,v,w)f(y,w)d\mu(w)\nonumber\\
&  =Kf \label{limitKf}%
\end{align}
where $\left\langle \cdot,\cdot\right\rangle $ stands for duality pairings
between $L^{p^{\prime}}(V)$ and $L^{p}(V)$. Moreover, by Holder inequality, we
have for a.e. $(y,v)\in\mathbb{R}_{y}^{d}\times V$,
\begin{align}
\left\vert Kf(y,v)\right\vert  &  \leq\int_{V}\left\vert \sigma(y,v,w)f_{n}%
(y,w)\right\vert d\mu(w)\nonumber\\
&  \leq\left\Vert f_{n}(y,\cdot)\right\Vert _{L^{2}(V)}\left(  \int
_{V}\left\vert \sigma(y,v,w)\right\vert ^{2}d\mu(w)\right)  ^{\frac{1}{2}%
}\nonumber\\
&  \leq\sup_{n}\left\Vert f_{n}(y,\cdot)\right\Vert _{L^{2}(V)}\left(
\int_{V}\left\vert \sigma(y,v,w)\right\vert ^{p^{\prime}}d\mu(w)\right)
^{\frac{1}{2}}\nonumber\\
&  :=G(y,v). \label{Gyv}%
\end{align}
Thus we have shown in (\ref{limitKf}) that $(Kf_{n}(\cdot,v))_{n}$ converges
pointwise to $Kf(\cdot,v)$ for almost every $v$. On the other hand,
$(Kf_{n}(\cdot,v))_{n}$ is bounded uniformly in $n$ by the function $G(y,v)$,
which is of power $2$ integrable with respect to $d\mu(v)$ for a.e.
$y\in\mathbb{R}_{y}^{d}$ . We infer by the Lebesgue dominated convergence
theorem that for a.e. $y\in\mathbb{R}_{y}^{d}$,
\[
\lim_{n\rightarrow\infty}\left\Vert Kf(y,\cdot)-Kf_{n}(y,\cdot)\right\Vert
_{L^{2}(V)}^{2}=\lim_{n\rightarrow\infty}\int_{V}\left\vert Kf(y,v)-Kf_{n}%
(y,v)\right\vert ^{2}d\mu(v)=0,
\]
which implies that
\begin{align*}
\lim_{n\rightarrow\infty}\left\Vert Kf-Kf_{n}\right\Vert _{L^{2}(V;B_{A}%
^{2}(\mathbb{R}_{y}^{d}))}^{2}\  &  =\lim_{n\rightarrow\infty}\int_{V}M\left(
\left\vert Kf(y,v)-Kf_{n}(y,v)\right\vert ^{2}\right)  d\mu(v)\\
&  =M\left(  \lim_{n\rightarrow\infty}\int_{V}\left\vert Kf(y,v)-Kf_{n}%
(y,v)\right\vert ^{2}d\mu(v)\right) \\
&  =M\left(  0\right)  =0.
\end{align*}
It follows that $K$ is compact in $L^{2}(V;B_{A}^{2}(\mathbb{R}_{y}^{d}))$.
Whence $\mathcal{O}=K\circ\mathcal{A}^{-1}$ is compact in $L^{2}(V;B_{A}%
^{2}(\mathbb{R}_{y}^{d}))$.\medskip

\textit{Step 4: Dimension of }$\ker P$\textit{ by Krein-Rutman theorem.}

Keeping this compactness result in mind and coming back to problem
(\ref{rewpb}), let us consider the following convex closed cone $C=\left\{
f\in L^{2}(V;B_{A}^{2}(\mathbb{R}_{y}^{d})):f\geq0\right\}  $. $C$ is a total
cone because $C-C=L^{2}(V;B_{A}^{2}(\mathbb{R}_{y}^{d}))$. Its interior
$\mathrm{int}C$ is trivially not empty and we have $C\cap(-C)=\left\{
0\right\}  $. Otherwise, since $\sigma>0$ implies $K(f)>0$ for any nonnegative
function $f\in L^{2}(V;B_{A}^{2}(\mathbb{R}_{y}^{d}))$, we deduce that the
compact operator $\mathcal{O}=K\circ\mathcal{A}^{-1}$ sends $C\backslash\{0\}$
into $\mathrm{int}C$. Thus, the positive compact operator $\mathcal{O}%
=K\circ\mathcal{A}^{-1}$ and the convex closed cone $C$ satisfy the conditions
of Theorem \cite[(10.5), P. 281]{Schaefer}, which is the generalization in the
locally convex spaces of the well known Krein-Rutman theorem \cite[Theorem
VI.13, p. 100]{brezis}. Hence, it guaranties the existence of a unique
eigenvalue $\lambda$ of $\mathcal{O}$ (which is its spectral radius)
associated to a non negative eigenfunction $H\geq0$ ($\mathcal{O}(H)=\lambda
H$). We then set $AF=H$, so that $K(F)=\lambda AF$. Applying to this equality,
the mean value $M$ with respect to $y$ and next integrating with respect to
$v$ using the semi-detailed balance condition (\ref{sdbc}) and equality
(\ref{tdnc}), we are lead to
\[
\int_{V\times V}M\left(  \sigma(\cdot,v,w)F(\cdot,w)\right)  d\mu
(w)d\mu(v)=\lambda\int_{V\times V}M\left(  \sigma(\cdot,v,w)F(\cdot,w)\right)
d\mu(w)d\mu(v)>0.
\]
We deduce that $\lambda=1$ is the principal eigenvalue of $\mathcal{O}$.
Furthermore, we have seen in \textit{Step 2} that $\mathcal{A}^{-1}$ and $K$
are non negative linear operators. Thus, since $F=\mathcal{A}^{-1}H\geq0$,
this implies that $AF=H=K(F)>0$, and so $F=\mathcal{A}^{-1}H>0$. Finally, as
$f\geq0$ implies $\mathcal{O}(f)>0$, we deduce that the dimension of the
eigenspace is one. We can proceed in the same way with the adjoint operator
$P^{\ast}$, noticing that $\chi_{\mathbb{R}_{y}^{d}\times V}\in\ker P^{\ast}$,
since constants belong to $\ker P^{\ast}$. To conclude the proof of (i), let
us show that (\ref{eq17}) is satisfied.

Let $\varphi\in\mathcal{C}_{0}^{\infty}(\mathbb{R}^{d})$ and $\phi
\in\mathcal{C}^{\infty}(V;A^{\infty})$ be freely fixed, and define the
function $\psi(y,v)=\varphi(\varepsilon y)\phi(y,v)$ ($y\in\mathbb{R}^{d}$)
for a given $\varepsilon>0$. We multiply the equation $PF=0$ by $\psi$ and
integrate by parts the resulting equation over $\mathbb{R}^{d}\times V$, and
next make the change of variables $z=\varepsilon y$ to get
\begin{equation}
\iint_{\mathbb{R}^{d}\times V}F^{\varepsilon}\left[  a(v)\cdot(\varepsilon
\phi^{\varepsilon}\nabla_{z}\varphi+\varphi(\nabla_{y}\phi)^{\varepsilon
})+\varphi(\mathcal{Q}^{\ast}\phi)^{\varepsilon}\right]  dzd\mu(v)=0
\label{eq20}%
\end{equation}
where $\eta^{\varepsilon}(z,v)=\eta(z/\varepsilon,v)$. Letting $\varepsilon
\rightarrow0$ in (\ref{eq20}) using the definition of the sigma-convergence
(for time independent sequences) we obtain
\[
\iint_{\mathbb{R}^{d}\times V}M(F(a(v)\cdot\nabla_{y}\phi(\cdot,v)+\mathcal{Q}%
^{\ast}\phi(\cdot,v)))\varphi(z)dzd\mu(v)=0.
\]
The arbitrariness of $\varphi$ yields at once
\[
\int_{V}M(F(a(v)\cdot\nabla_{y}\phi(\cdot,v)+\mathcal{Q}^{\ast}\phi
(\cdot,v)))d\mu(v)=0,
\]
that is (\ref{eq17}). This finishes the proof of (i).\medskip

\textit{Step 5: The Fredholm Alternative.}

In order to apply the Fredholm Alternative, we need to consider the extension
$\mathcal{P}$ of $P$ defined on $L^{2}(V;\mathcal{B}_{A}^{2}(\mathbb{R}%
_{y}^{d}))$. Then all the properties obtained in Steps 1-4 are valid mutatis
mutandis (replace $P$ by $\mathcal{P}$ and $L^{2}(V;B_{A}^{2}(\mathbb{R}%
_{y}^{d}))$ by $L^{2}(V;\mathcal{B}_{A}^{2}(\mathbb{R}_{y}^{d}))$). We also
observe that (\ref{eq6}) may be seen as equivalent to the following one
\begin{equation}
\mathcal{P}\widetilde{f}=\widetilde{g}%
.\ \ \ \ \ \ \ \ \ \ \ \ \ \ \ \ \ \ \ \ \ \ \ \ \ \ \ \ \ \ \ \ \ \ \ \ \ \ \ \ \ \label{eq9}%
\end{equation}
Indeed (\ref{eq6}) implies (\ref{eq9}) while the reverse implication holds
provided that we replace $g$ by $g+\psi$ where $\psi\in L^{2}(V;\mathcal{N})$.
However in the variational form of (\ref{eq6}) we observe that the
contribution of $\psi$ is of no effect since $M(\psi(\cdot,v)\phi(\cdot,v))=0$
for $\psi\in L^{2}(V;\mathcal{N})$ and any $\phi\in L^{2}(V;B_{A}%
^{2}(\mathbb{R}_{y}^{d}))$.

This being so, we note that the condition $\int_{V}M(g)d\mu(v)=0$ is necessary
for the solvability of $Pf=g$, since $\int_{V}M(Pf)d\mu(v)=0$. Indeed, as the
mean value mapping $u\mapsto M(u)$ commutes with integral with respect to $v$,
we simply notice that combining (\ref{Lf1}) and (\ref{tdnc}), we have
\[
\int_{V}\mathcal{Q}f(y,v)d\mu(v)=0\text{ for all }f\in L^{2}(V;B_{A}%
^{2}(\mathbb{R}_{y}^{d})).
\]
Furthermore, by a simple computation, we show that $M\left(  a(v)\cdot
\nabla_{y}f(\cdot,v)\right)  =0$.

With this in mind, we may replace (\ref{eq6}) by (\ref{eq9}) and we remark
that (\ref{eq9}) is equivalent to (\ref{rewpb}) in which we consider the
corresponding extensions of the operators involved in (\ref{rewpb}) and
defined on $L^{2}(V;\mathcal{B}_{A}^{2}(\mathbb{R}_{y}^{d}))$, so that
applying the Fredholm alternative, we have $\operatorname{Im}(I-\mathcal{O}%
)=\ker(I-\mathcal{O}^{\ast})^{\bot}$, which implies $\operatorname{Im}P=(\ker
P^{\ast})^{\bot}$. Hence, $Pf=g$ is solvable for $g\in\operatorname{Im}P=(\ker
P^{\ast})^{\bot}$. But as the eigenspaces of $P$ and $P^{\ast}$ are spanned by
positive functions, the condition $\int_{V}M(g)d\mu(v)=0$ guarantees the
uniqueness. So, for any element in $L_{0}^{2}(V;B_{A}^{2}(\mathbb{R}_{y}%
^{d}))=\{g\in L^{2}(V;B_{A}^{2}(\mathbb{R}_{y}^{d})):\int_{V}M(g(\cdot
,w))d\mu(w)=0\}$, we find a unique $f\in L_{0}^{2}(V;B_{A}^{2}(\mathbb{R}%
_{y}^{d}))$ solution of $Pf=g$. To prove inequality (\ref{bornfg}), we just
apply the open mapping theorem (see, for instance \cite[p. 18]{brezis}) to the
linear operator $\mathcal{P}$, which gives the existence of $C>0$ such that
\[
\left\Vert f\right\Vert _{L^{2}(V;B_{A}^{2}(\mathbb{R}_{y}^{d}))}\leq
C\left\Vert g\right\Vert _{L^{2}(V;B_{A}^{2}(\mathbb{R}_{y}^{d}))}.
\]
Similar conclusions hold for the adjoint problem $P^{\ast}\varphi=\phi$. This
concludes the proof.
\end{proof}

The next two results show how the regularity of the coefficients gives \ rise
to the regularity of the solutions of (\ref{eq6}) and (\ref{eq7}). The second
one takes into account the dependence with respect to the parameter $x$. Their
proofs are copied on that of \cite[Lemmas 3.2 and 3.3]{Goudon-Mellet}.

In what follows, $A^{1}$ denotes the space $\{u\in A:\nabla u\in(A)^{d}\}$
while $V_{\omega}$ stands for the space $V$ with generic variable $\omega\in
V$.

\begin{lemma}
\label{lemmeregu1}Suppose that $\sigma(y,v,w)$ and $\frac{\partial\sigma
}{\partial y_{i}}(y,v,w)$ ($1\leq i\leq d$) lie in $\mathcal{C}(V_{v}\times
V_{w};L^{\infty}(\mathbb{R}_{y}^{d}))\cap L^{\infty}(V_{v}\times V_{w}%
\times\mathbb{R}_{y}^{d})$. Then, for $g\in\mathcal{C}(V;A^{1})$, the solution
of \emph{(\ref{eq6})} lies in $\mathcal{C}(V;A^{1})$. A similar conclusion
holds for the adjoint equation \emph{(\ref{eq7})}.
\end{lemma}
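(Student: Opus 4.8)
The plan is to obtain the regularity by differentiating the equation $Pf=g$ with respect to the fast variable $y$ and then controlling the resulting right-hand side by means of the a priori estimate (\ref{bornfg}). Throughout I write $P=\mathcal{A}-K$ as in (\ref{TAK}), so that the unique solution $f$ furnished by Proposition \ref{propkers} also satisfies the fixed-point identity $f=\mathcal{A}^{-1}(Kf+g)$, with $\mathcal{A}^{-1}$ given explicitly by (\ref{fAh}).

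First I would treat the $y$-regularity. Fix $1\le i\le d$ and $\delta>0$, set $h=\delta e_{i}$, and introduce the difference quotient $D_{i}^{\delta}f=\delta^{-1}(\tau_{h}f-f)$, where $(\tau_{h}f)(y,v)=f(y+h,v)$; this is legitimate because $A$, hence $L^{2}(V;B_{A}^{2}(\mathbb{R}_{y}^{d}))$, is translation invariant. A direct computation shows that $\tau_{h}$ commutes with $a(v)\cdot\nabla_{y}$ but not with $\mathcal{Q}$, the defect being governed by the difference quotient of $\sigma$ in $y_{i}$. Consequently $D_{i}^{\delta}f$ solves $P(D_{i}^{\delta}f)=D_{i}^{\delta}g+R_{i}^{\delta}$, where
\[
R_{i}^{\delta}(y,v)=\int_{V}\frac{\sigma(y+h,v,w)-\sigma(y,v,w)}{\delta}\big(f(y+h,w)-f(y+h,v)\big)\,d\mu(w).
\]
By the mean value theorem and the hypothesis $\partial\sigma/\partial y_{i}\in L^{\infty}(V_{v}\times V_{w}\times\mathbb{R}_{y}^{d})$, the kernel is bounded uniformly in $\delta$, so $R_{i}^{\delta}$ is bounded in $L^{2}(V;B_{A}^{2}(\mathbb{R}_{y}^{d}))$ uniformly in $\delta$; and $D_{i}^{\delta}g\to\partial g/\partial y_{i}$ in that space since $g\in\mathcal{C}(V;A^{1})$. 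As $\int_{V}M(D_{i}^{\delta}f)\,d\mu(v)=0$ by translation invariance of the mean value, the bound (\ref{bornfg}) applies and yields $\|D_{i}^{\delta}f\|_{L^{2}(V;B_{A}^{2})}\le C$ uniformly in $\delta$; letting $\delta\to0$ in the Hilbert space $L^{2}(V;\mathcal{B}_{A}^{2}(\mathbb{R}_{y}^{d}))$ produces a weak limit equal to $\partial f/\partial y_{i}$, which therefore lies in $L^{2}(V;B_{A}^{2})$ and solves
\[
P\Big(\frac{\partial f}{\partial y_{i}}\Big)=\frac{\partial g}{\partial y_{i}}+\int_{V}\frac{\partial\sigma}{\partial y_{i}}(\cdot,v,w)\big(f(\cdot,w)-f(\cdot,v)\big)\,d\mu(w).
\]
Feeding this back into $f=\mathcal{A}^{-1}(Kf+g)$, in which the source $Kf+g$ now has an $L^{2}$-in-$y$ gradient while $g$ itself is genuinely in $A^{1}$, one upgrades this Besicovitch–Sobolev regularity to membership of $f(\cdot,v)$ in $A^{1}$, using that $\mathcal{A}^{-1}$ smooths along the characteristics $y-a(v)s$ and preserves the algebra.

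Next I would prove continuity in $v$, keeping $v$ frozen and using the pointwise identity $f(\cdot,v)=\mathcal{A}_{v}^{-1}\big(Kf(\cdot,v)+g(\cdot,v)\big)$, with $\mathcal{A}_{v}=a(v)\cdot\nabla_{y}+\Sigma(\cdot,v)$. For $v'\to v$, subtracting the two identities shows that $\mathcal{A}_{v}\big(f(\cdot,v')-f(\cdot,v)\big)$ is a sum of four terms: $g(\cdot,v')-g(\cdot,v)$; $Kf(\cdot,v')-Kf(\cdot,v)$; $(a(v)-a(v'))\cdot\nabla_{y}f(\cdot,v')$; and $(\Sigma(\cdot,v)-\Sigma(\cdot,v'))f(\cdot,v')$. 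Using the bound $\|\mathcal{A}_{v}^{-1}\|\le1/\Sigma_{1}$ from (\ref{bornAm1}), the continuity of $a$, of $g$ into $A^{1}$, and of $\sigma$ (hence of $\Sigma$) into $L^{\infty}$ in $(v,w)$, together with the uniform-in-$v$ bounds on $f$ and $\nabla_{y}f$ from the previous step, each term tends to $0$. This gives continuity of $v\mapsto f(\cdot,v)$ into $B_{A}^{2}$, and applying the same subtraction to the differentiated equation upgrades it to continuity into $A^{1}$; hence $f\in\mathcal{C}(V;A^{1})$.

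The main obstacle is the rigorous justification of the differentiation step: one must show that the commutator defect $R_{i}^{\delta}$ stays bounded in the mean-value seminorm uniformly in $\delta$ and that the weak limit of the difference quotients is genuinely $\partial f/\partial y_{i}$, rather than merely a solution of a related equation. This is where the translation invariance of the algebra and the inherited zero-mean constraint $\int_{V}M(\cdot)\,d\mu(v)=0$ — which places the difference quotients in the subspace on which (\ref{bornfg}) is valid — are essential. A secondary delicate point is the passage from Besicovitch–Sobolev regularity to genuine membership in the smooth algebra $A^{1}$, which rests on the smoothing effect of $\mathcal{A}^{-1}$ along characteristics and on $g$ lying in $A^{1}$. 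Finally, the adjoint equation (\ref{eq7}) is handled verbatim by replacing $\sigma(y,v,w)$ with $\sigma(y,w,v)$ in $K$ and in $\mathcal{Q}^{\ast}$; since the hypotheses on $\sigma$ and $\partial\sigma/\partial y_{i}$ are symmetric in $(v,w)$, every estimate above carries over and yields the stated conclusion for $\phi$.
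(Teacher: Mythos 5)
Your overall skeleton is the right one --- differentiate the equation and exploit the fact that the commutator term involves only $f$ (not $\partial_i f$) together with $\partial_i\sigma$ --- and your difference-quotient identity for $R_i^\delta$ is correct. But there is a genuine gap at the decisive step. The detour through difference quotients, the a priori bound (\ref{bornfg}) and weak compactness in $L^{2}(V;\mathcal{B}_{A}^{2}(\mathbb{R}_{y}^{d}))$ can only ever produce $\partial f/\partial y_{i}$ as an element of the Besicovitch space $B_{A}^{2}(\mathbb{R}_{y}^{d})$, i.e.\ as an equivalence class modulo the null space $\mathcal{N}$ of the seminorm; such an object carries no pointwise or sup-norm information and need not be (representable by) a continuous function. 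The conclusion of the lemma, $f\in\mathcal{C}(V;A^{1})$, is a statement about membership in the uniform algebra with uniformly continuous derivatives, and your proposed ``upgrade'' --- feeding $\nabla_y(Kf+g)\in L^{2}$-in-$y$ back through $f=\mathcal{A}^{-1}(Kf+g)$ --- cannot bridge this: the formula (\ref{fAh}) is an exponentially weighted average of translates along characteristics, so $\mathcal{A}^{-1}$ \emph{preserves} whatever $y$-regularity the source has but creates none; a source whose $y$-gradient lies only in $B_{A}^{2}$ yields a solution whose $y$-gradient lies only in $B_{A}^{2}$. The same objection applies to your final sentence on continuity in $v$: continuity into $B_{A}^{2}$ does not upgrade to continuity into $A^{1}$.

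The paper's route avoids the Besicovitch space entirely for this lemma. It first shows $f\in\mathcal{C}(V;A)$ by working with the rewritten fixed-point problem (\ref{rewpb2}), $(I-K\circ\mathcal{A}^{-1})h=g$ with $h=\mathcal{A}f$, whose solution $h$ stays in $\mathcal{C}(V;A)$, and then reads off $f=\mathcal{A}^{-1}h\in\mathcal{C}(V;A)$ from the explicit formula (\ref{fAh}). Only then does it differentiate the equation, obtaining (\ref{opderive}) whose right-hand side $\partial_i g+\partial_i\mathcal{Q}(f)$ is manifestly in $\mathcal{C}(V;A)$ --- because $g\in\mathcal{C}(V;A^{1})$ and, by (\ref{deriveQ}), the commutator involves only $f$ itself multiplied by $\partial_i\sigma$ --- and solves this \emph{new} equation of the same type in the class $\mathcal{C}(V;A)$ by the same mechanism, yielding $\partial_i f\in\mathcal{C}(V;A)$ directly. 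If you want to salvage your argument, the fix is to run your bootstrap at the level of the algebra $A$ (sup-norms, via the decomposition $P=\mathcal{A}-K$ of (\ref{TAK}) and the explicit inverse) rather than at the level of the seminormed space $B_{A}^{2}$, reserving the $L^{2}$-type estimate (\ref{bornfg}) for what it is actually used for elsewhere, namely Lemma \ref{lemmeregu2}.
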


\begin{proof}
Let $\sigma$, $\frac{\partial\sigma}{\partial y_{i}}$ and $g$ be given as in
the statement of the lemma. In particular, we have $g\in\mathcal{C}%
(V;A^{1})\subset\mathcal{C}(V;A)\subset\mathcal{C}(V;B_{A}^{2}(\mathbb{R}%
_{y}^{d}))$. Proceeding as in the previous proof (see \emph{Step 4}), we show
that the solution $h$ of the following rewritten problem
\begin{equation}
(I-K\circ\mathcal{A}^{-1})(h)=g,\ h=\mathcal{A}(f) \label{rewpb2}%
\end{equation}
belongs to $\mathcal{C}(V;A)$. To conclude, we need to show that $h$ and
$f=\mathcal{A}^{-1}h$ have the same regularity. To that end, Thanks to
(\ref{fAh}), we have
\[
f(y,v)=\int_{0}^{\infty}\exp\left(  -\int_{0}^{s}\Sigma(y-a(v)\tau
,v)d\tau\right)  h(y-a(v)s,v)ds,
\]
which implies that $f\in\mathcal{C}(V;A)$.

Now, let us fix $1\leq i\leq d$, and set $\partial_{i}=\partial/\partial
y_{i}$. Applying $\partial_{i}$ to (\ref{eq6}) yields
\begin{equation}
a(v)\cdot\nabla_{y}\partial_{i}f-\mathcal{Q}(\partial_{i}f)=\partial
_{i}g+\partial_{i}\mathcal{Q}(f), \label{opderive}%
\end{equation}
where
\begin{equation}
\partial_{i}\mathcal{Q}(f)(y,v)=\int_{V}\partial_{i}\sigma(y,v,w)f(y,w)d\mu
(w)-\partial_{i}\Sigma(y,v)f(y,v). \label{deriveQ}%
\end{equation}
Since $f\in\mathcal{C}(V;A)$, we have $\partial_{i}g$, $\partial
_{i}\mathcal{Q}(f)\in\mathcal{C}(V;A)$. Therefore, the right-hand side of
(\ref{opderive}) belongs to $\mathcal{C}(V;A)$. Thanks to the Proposition
\ref{propkers} applying this time to problem (\ref{opderive}), we obtain
$\partial_{i}f\in\mathcal{C}(V;A)$. Hence $f\in\mathcal{C}(V;A^{1})$. The same
arguments can be applied to the adjoint problem.
\end{proof}

It is worth noticing that derivation of the equation for the equilibrium
function $F$ shows similarly that $F\in\mathcal{C}(V;A^{1})$.

\begin{lemma}
\label{lemmeregu2}Let $k\in\mathbb{N}^{\ast}$. If $\sigma(x,y,v,w)\in
\mathcal{C}^{k}(\mathbb{R}_{x}^{d};L^{\infty}(V\times V;B_{A}^{2,\infty
}(\mathbb{R}_{y}^{d})))$ and $g\in\mathcal{C}^{k}(\mathbb{R}_{x}^{d};L^{2}(V;$
$B_{A}^{2}(\mathbb{R}_{y}^{d})))$, then the solution of \emph{(\ref{eq6})}
lies in $\mathcal{C}^{k}(\mathbb{R}_{x}^{d};L^{2}(V;B_{A}^{2}(\mathbb{R}%
_{y}^{d})))$. A similar conclusion holds for the adjoint equation
\emph{(\ref{eq7})}.
\end{lemma}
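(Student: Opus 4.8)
The plan is to argue by induction on $k$, treating $x$ as a parameter and exploiting the fact that the only $x$-dependence in $P=P_x$ resides in the kernel $\sigma$, the advection part $a(v)\cdot\nabla_y$ being independent of $x$. At each fixed $x$, Proposition \ref{propkers} provides a solution operator bounded as in (\ref{bornfg}); this fixed-$x$ bound is all that is needed below. For the base case $k=0$ (continuity), let $f(x)$ solve $P_xf(x)=g(x)$. Since $P_x-P_{x'}=-(\mathcal{Q}_x-\mathcal{Q}_{x'})$, subtracting the equations at $x$ and $x'$ gives
\[
P_x(f(x)-f(x'))=g(x)-g(x')+(\mathcal{Q}_x-\mathcal{Q}_{x'})f(x').
\]
The right-hand side has zero $v$-averaged mean value: for $g(x)-g(x')$ because each datum is admissible, and for the collision term because both $\mathcal{Q}_x$ and $\mathcal{Q}_{x'}$ conserve mass by (\ref{sdbc}). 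Hence Proposition \ref{propkers}(ii) and (\ref{bornfg}) yield $\|f(x)-f(x')\|\le C\big(\|g(x)-g(x')\|+\|\sigma(x,\cdot)-\sigma(x',\cdot)\|_{L^\infty}\|f(x')\|\big)$, which tends to $0$ as $x'\to x$.

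For the inductive step, suppose the statement holds through order $k-1$ and let $\sigma,g\in\mathcal{C}^{k}$, so that $f\in\mathcal{C}^{k-1}$ already. Fix $1\le i\le d$ and form the difference quotients $D^h_if(x)=h^{-1}(f(x+he_i)-f(x))$. Arguing as above,
\[
P_x\big(D^h_if(x)\big)=D^h_ig(x)+\Big(h^{-1}(\mathcal{Q}_{x+he_i}-\mathcal{Q}_x)\Big)f(x+he_i).
\]
As $h\to0$ the right-hand side converges in $L^2(V;B_A^2(\mathbb{R}_y^d))$ to $\partial_{x_i}g+(\partial_{x_i}\mathcal{Q})f=:R_i$, so by the estimate (\ref{bornfg}) the quotients $D^h_if$ converge to the solution $w$ of $P_xw=R_i$; thus $\partial_{x_i}f=w$ exists. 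Now $R_i\in\mathcal{C}^{k-1}$, being built from $\partial_{x_i}g\in\mathcal{C}^{k-1}$, $\partial_{x_i}\sigma\in\mathcal{C}^{k-1}$ and $f\in\mathcal{C}^{k-1}$, and it meets the solvability condition $\int_V M(R_i)\,d\mu(v)=0$: differentiating (\ref{sdbc}) in $x_i$ shows that $\partial_{x_i}\mathcal{Q}$ conserves mass exactly as $\mathcal{Q}$ does, so $\int_V M((\partial_{x_i}\mathcal{Q})f)\,d\mu(v)=0$, while $\int_V M(\partial_{x_i}g)\,d\mu(v)=\partial_{x_i}\int_V M(g)\,d\mu(v)=0$. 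Applying the inductive hypothesis to $P_xw=R_i$ gives $\partial_{x_i}f\in\mathcal{C}^{k-1}$, whence $f\in\mathcal{C}^{k}$.

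The adjoint equation is handled by the same induction, anchored by (\ref{bornpvp}) and Proposition \ref{propkers}(iii), but I expect its solvability bookkeeping to be the main obstacle: the condition for $P^\ast\phi=\varphi$ is $\int_V M(\varphi F)\,d\mu(v)=0$, and since $\ker P=\mathrm{span}\{F(x)\}$ now varies with $x$, one must check that the differentiated datum $\partial_{x_i}\varphi+(\partial_{x_i}\mathcal{Q}^\ast)\phi$ stays orthogonal to $F(x)$. This is secured by combining the duality $\int_V M(\phi(\partial_{x_i}\mathcal{Q})u)\,d\mu(v)=\int_V M(u(\partial_{x_i}\mathcal{Q}^\ast)\phi)\,d\mu(v)$ (valid because $\partial_{x_i}\sigma$ inherits (\ref{sdbc})) with the identity $(\partial_{x_i}\mathcal{Q})F=P(\partial_{x_i}F)$ obtained by differentiating $PF=0$; together with $P^\ast\phi=\varphi$ these give
\[
\int_V M\big((\partial_{x_i}\mathcal{Q}^\ast)\phi\,F\big)\,d\mu(v)=\int_V M\big(\varphi\,\partial_{x_i}F\big)\,d\mu(v),
\]
which exactly cancels the term produced by differentiating $\int_V M(\varphi F)\,d\mu(v)=0$. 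Once this compatibility is in place, the induction proceeds as in the primal case and delivers $\phi\in\mathcal{C}^{k}(\mathbb{R}_x^d;L^2(V;B_A^2(\mathbb{R}_y^d)))$.
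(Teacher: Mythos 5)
Your overall strategy --- difference quotients in $x$ combined with the a priori bound of Proposition \ref{propkers} --- is the same as the paper's, and your bookkeeping of the solvability conditions (both for $R_i$ in the primal problem and, via the identity $(\partial_{x_i}\mathcal{Q})F=P(\partial_{x_i}F)$, for the differentiated adjoint datum) is more explicit than what the paper records; that part is correct and welcome.

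There is, however, one genuine gap, and it is precisely the point to which the paper devotes the first half of its proof: the constant in (\ref{bornfg}) depends on $x$ through $\sigma$, and you assert that ``this fixed-$x$ bound is all that is needed.'' It is not quite. In your base case the estimate reads
\[
\left\Vert f(x)-f(x')\right\Vert \leq C(x)\bigl(\left\Vert g(x)-g(x')\right\Vert +\left\Vert \sigma(x,\cdot)-\sigma(x',\cdot)\right\Vert _{L^{\infty}}\left\Vert f(x')\right\Vert \bigr),
\]
and to conclude you need $\left\Vert f(x')\right\Vert$ to stay bounded as $x'\to x$. The only bound available a priori is $\left\Vert f(x')\right\Vert \leq C(x')\left\Vert g(x')\right\Vert$ with a constant $C(x')$ whose behaviour in $x'$ is unknown; invoking continuity of $f$ to bound $\left\Vert f(x')\right\Vert$ would be circular. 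The paper closes this by a Heine/covering perturbation argument showing that $C(\cdot)$ is locally bounded (see display (\ref{bornf(x)})). The quickest repair inside your framework: write $P_{x}f(x')=g(x')+(\mathcal{Q}_{x}-\mathcal{Q}_{x'})f(x')$, apply (\ref{bornfg}) at the fixed point $x$, and absorb the term $C(x)\left\Vert \mathcal{Q}_{x}-\mathcal{Q}_{x'}\right\Vert \left\Vert f(x')\right\Vert$ into the left-hand side for $x'$ close enough to $x$ that $C(x)\left\Vert \mathcal{Q}_{x}-\mathcal{Q}_{x'}\right\Vert \leq\frac{1}{2}$; this yields $\left\Vert f(x')\right\Vert \leq2C(x)\left\Vert g(x')\right\Vert$, bounded near $x$. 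With that inserted, your base case holds, and the induction (where boundedness of $f(x+he_{i})$ is supplied by the already-established continuity) goes through as you describe.
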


\begin{proof}
We first check that the constant $C$ in Proposition \ref{propkers}, which
depends on the parameter $x$, is locally bounded. Indeed, let $K$ be a compact
subset of $\mathbb{R}^{d}$ and $0<\delta<1$. Applying Heine's theorem to the
restriction of $x\mapsto\sigma(x,\cdot)$ to $K$, we derive the existence of a
constant $\eta_{\delta}>0$ such that, for any $x_{1},x_{2}\in K$ with
$\left\vert x_{1}-x_{2}\right\vert \leq\eta_{\delta}$, we have $\left\Vert
\sigma(x_{1},\cdot)-\sigma(x_{2},\cdot)\right\Vert _{L^{\infty}(Y\times
V\times V)}\leq\frac{\delta}{C(x_{2})C}$, where $C$ is the constant obtained
in (\ref{BoundQf}), depending only on $\mu(V)<\infty$ and $C(x_{2})$ the
constant in Proposition \ref{propkers}. Accordingly
\[
\left\Vert \mathcal{Q}(x)-\mathcal{Q}(x^{\prime})\right\Vert _{\mathcal{L}%
\left(  L^{2}(V;B_{A}^{2}(\mathbb{R}_{y}^{d}))\right)  }\leq\frac{\delta
}{C(x^{\prime})}
\]
where $\mathcal{L}\left(  L^{2}(V;B_{A}^{2}(\mathbb{R}_{y}^{d}))\right)  $
stands for the space of bounded linear operators from $L^{2}(V;B_{A}%
^{2}(\mathbb{R}_{y}^{d}))$ into itself. Let $(B(x_{i},\eta_{\delta}))_{1\leq
i\leq I_{\delta}}$ be a finite covering of $K$: $K\subset\cup_{i=1}%
^{I_{\delta}}B(x_{i},\eta_{\delta})$. For any $x\in K$, we rewrite the
equation $P(x)(f(x))=g(x)$ under the form
\[
P(x_{i})(f(x))=g(x)+(\mathcal{Q}(x)-\mathcal{Q}(x_{i}))(f(x)).
\]
It follows that
\begin{align*}
\left\Vert P(x_{i})f(x)\right\Vert _{L^{2}(V;B_{A}^{2}(\mathbb{R}_{y}^{d}))}
&  \leq\left\Vert g(x)\right\Vert _{L^{2}(V;B_{A}^{2}(\mathbb{R}_{y}^{d}%
))}+\left\Vert \mathcal{Q}(x)-\mathcal{Q}(x_{i})\right\Vert _{\mathcal{L}%
\left(  L^{2}(V;B_{A}^{2}(\mathbb{R}_{y}^{d}))\right)  }\left\Vert
f(x)\right\Vert _{L^{2}(V;B_{A}^{2}(\mathbb{R}_{y}^{d}))}\\
&  \leq\left\Vert g(x)\right\Vert _{L^{2}(V;B_{A}^{2}(\mathbb{R}_{y}^{d}%
))}+\frac{\delta}{C(x_{i})}\left\Vert f(x)\right\Vert _{L^{2}(V;B_{A}%
^{2}(\mathbb{R}_{y}^{d}))}.
\end{align*}
But thanks to (\ref{bornfg}), we have
\begin{align*}
\frac{1}{C(x_{i})}\left\Vert f(x)\right\Vert _{L^{2}(V;B_{A}^{2}%
(\mathbb{R}_{y}^{d}))}  &  \leq\left\Vert g(x)\right\Vert _{L^{2}(V;B_{A}%
^{2}(\mathbb{R}_{y}^{d}))}\\
&  =\left\Vert P(x)f(x)\right\Vert _{L^{2}(V;B_{A}^{2}(\mathbb{R}_{y}^{d}))}\\
&  \leq\left\Vert g(x)\right\Vert _{L^{2}(V;B_{A}^{2}(\mathbb{R}_{y}^{d}%
))}+\frac{\delta}{C(x_{i})}\left\Vert f(x)\right\Vert _{L^{2}(V;B_{A}%
^{2}(\mathbb{R}_{y}^{d}))}.
\end{align*}
We then deduce that
\begin{equation}
\left\Vert f(x)\right\Vert _{L^{2}(V;B_{A}^{2}(\mathbb{R}_{y}^{d}))}\leq
\frac{C(x_{i})}{1-\delta}\left\Vert g(x)\right\Vert _{L^{2}(V;B_{A}%
^{2}(\mathbb{R}_{y}^{d}))}. \label{bornf(x)}%
\end{equation}
Hence, we can use the constant $C=C(K,\delta)=\max\left\{  \frac{C(x_{i}%
)}{1-\delta};i=1,...,I_{\delta}\right\}  $.

Now, let $\delta_{h}^{i}$ denotes a differential quotient with respect to $x$:
$\delta_{h}^{i}f(x,\cdot)=\frac{f(x+he_{i},\cdot)-f(x,\cdot)}{h}$ for a fixed
$1\leq i\leq d$. We get from $Pf=g$ the following equation
\begin{equation}
a(v)\cdot\nabla_{y}\delta_{h}^{i}f-Q(\delta_{h}^{i}f)=\delta_{h}^{i}%
g+\delta_{h}^{i}Q(f). \label{eq8}%
\end{equation}
Arguing as in the proof of Lemma \ref{lemmeregu1}, we show that the right-hand
side of (\ref{eq8}) is bounded in $L^{2}(V;B_{A}^{2}(\mathbb{R}_{y}^{d}))$ and
so, $\left(  \delta_{h}^{i}f\right)  _{h>0}$ is also bounded in $L^{2}%
(V;B_{A}^{2}(\mathbb{R}_{y}^{d}))$. Therefore, the application $x\mapsto
f(x,\cdot)$ is differentiable with values in $L^{2}(V;B_{A}^{2}(\mathbb{R}%
_{y}^{d}))$, and we have, setting $\partial_{i}=\partial/\partial x_{i}$
\[
a(v)\cdot\nabla_{y}\partial_{i}f-Q(\partial_{i}f)=\partial_{i}g+\partial
_{i}Q(f).
\]
We easily show the continuity of the right-hand side with respect to $x$ using
the continuity of $f$, the hypothesis on $\sigma$ and (\ref{deriveQ}). From
that, we have the continuity of $\partial_{i}f$. These arguments can repeat
for higher derivatives and apply to the adjoint problem.
\end{proof}

In particular, the following consequence of the lemmas above will be useful in
the sequel.

\begin{corollary}
\label{corolregF}Suppose \emph{(\ref{H1})} and \emph{(\ref{H2})} hold. Then

\begin{itemize}
\item[(i)] $F$ and $\nabla_{x}F$ are continuous functions of their arguments,
where $F$ is determined by \emph{(\ref{KerP})}.

\item[(ii)] For any $g\in\mathcal{C}^{1}(\mathbb{R}_{x}^{d};\mathcal{C}%
(V;A^{1}))$, satisfying the compatibility condition in Proposition
\emph{\ref{propkers}}, the solution of $Pf=g$ is a continuous function of its
arguments as well as its first derivative with respect to $x$. A similar
conclusion holds for the adjoint equation $P^{\ast}\varphi=\phi$.
\end{itemize}
\end{corollary}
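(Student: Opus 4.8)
The plan is to read Corollary \ref{corolregF} as the juxtaposition of the two regularity lemmas: Lemma \ref{lemmeregu1} supplies regularity in the pair $(y,v)$ for each frozen macroscopic variable $x$, while Lemma \ref{lemmeregu2} supplies regularity in $x$ with values in $L^2(V;B_A^2(\mathbb{R}_y^d))$. The only genuine work is to upgrade these two separate statements into \emph{joint} continuity in all the arguments $(x,y,v)$, and here one exploits the fact that every element of $A^1\subset A\subset\mathrm{BUC}(\mathbb{R}^d)$ is an honest bounded uniformly continuous function rather than an equivalence class; consequently membership in $\mathcal{C}(V;A)$, the latter carrying the sup-norm, already yields pointwise continuity in $(y,v)$, and a continuous $x$-dependence in that same sup-norm topology produces joint continuity in $(x,y,v)$ by the standard evaluation argument.

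For part (i), I would first invoke the remark following Lemma \ref{lemmeregu1} (obtained by differentiating $PF=0$ in $y$), which gives, for each fixed $x$, the membership $F(x,\cdot,\cdot)\in\mathcal{C}(V;A^1)$; in particular $(y,v)\mapsto F(x,y,v)$ is continuous. Next I would apply Lemma \ref{lemmeregu2} with $k=1$ to the homogeneous equation $PF=0$ (whose right-hand side $0$ trivially lies in $\mathcal{C}^1(\mathbb{R}_x^d;L^2(V;B_A^2(\mathbb{R}_y^d)))$), so that $F\in\mathcal{C}^1(\mathbb{R}_x^d;L^2(V;B_A^2(\mathbb{R}_y^d)))$ and, differentiating $PF=0$ in $x_i$, the derivative solves $P(\partial_{x_i}F)=(\partial_{x_i}\mathcal{Q})F$ with right-hand side of the type \emph{(\ref{deriveQ})}, which lies in $\mathcal{C}(V;A)$ since $F(x,\cdot,\cdot)\in\mathcal{C}(V;A)$ and $\sigma$ is suitably regular under \emph{(\ref{H2})}; applying the $\mathcal{C}(V;A)$-version of Lemma \ref{lemmeregu1} established in its proof to this equation gives $\partial_{x_i}F(x,\cdot,\cdot)\in\mathcal{C}(V;A)$, hence continuity of $\nabla_x F$ in $(y,v)$ for each $x$.

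It then remains to promote the $x$-dependence from the $L^2(V;B_A^2(\mathbb{R}_y^d))$-valued setting of Lemma \ref{lemmeregu2} to the sup-norm topology of $\mathcal{C}(V;A)$. For this I would rerun the stability argument of Lemma \ref{lemmeregu2}, namely the local covering of a compact $K\subset\mathbb{R}^d$ and the perturbation estimate leading to \emph{(\ref{bornf(x)})}, but measuring everything in the $\mathcal{C}(V;A)$ norm and using the sup-norm bounds furnished by Lemma \ref{lemmeregu1} in place of the $L^2$ ones. This shows that $x\mapsto F(x,\cdot,\cdot)$ and $x\mapsto\nabla_x F(x,\cdot,\cdot)$ are continuous from $\mathbb{R}_x^d$ into $(\mathcal{C}(V;A),\left\Vert \cdot\right\Vert _\infty)$, and the evaluation map then yields joint continuity of $(x,y,v)\mapsto F(x,y,v)$ and of $(x,y,v)\mapsto\nabla_x F(x,y,v)$, which is assertion (i).

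For part (ii), I would run the identical scheme with $g\in\mathcal{C}^1(\mathbb{R}_x^d;\mathcal{C}(V;A^1))$ in place of the zero datum, checking along the way that the compatibility condition $\int_V M(g(x,\cdot,v))\,d\mu(v)=0$ is preserved under differentiation in $x$; this is immediate, since the identities $\int_V\mathcal{Q}f\,d\mu(v)=0$ and $M(a(v)\cdot\nabla_y f)=0$ established in Step 5 of the proof of Proposition \ref{propkers} make the solvability constraint stable under $\partial_{x_i}$, so Lemma \ref{lemmeregu1} applies to both $Pf=g$ and its $x$-differentiated form, while Lemma \ref{lemmeregu2} with $k=1$ controls the $x$-dependence. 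The adjoint statement for $P^\ast\varphi=\phi$ follows mutatis mutandis, exactly as in the adjoint parts of the two lemmas. I expect the only real obstacle to be the sup-norm upgrade of the third paragraph: Lemma \ref{lemmeregu2} by itself delivers $x$-regularity only in the coarse $L^2(V;B_A^2(\mathbb{R}_y^d))$-seminorm, which sees functions merely up to $\mathcal{N}$ and hence cannot on its own guarantee pointwise continuity in $(y,v)$; reconciling this with the genuine continuity furnished by Lemma \ref{lemmeregu1} is where the argument must be carried out with care.
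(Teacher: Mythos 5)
Your proposal is correct and follows essentially the same route as the paper, which states the corollary without proof as an immediate consequence of Lemmas \ref{lemmeregu1} and \ref{lemmeregu2} (together with the remark that differentiating the equation $PF=0$ gives $F\in\mathcal{C}(V;A^{1})$ for each fixed $x$). Your additional observation --- that Lemma \ref{lemmeregu2} controls the $x$-dependence only in the seminormed space $L^{2}(V;B_{A}^{2}(\mathbb{R}_{y}^{d}))$, which identifies functions modulo $\mathcal{N}$, so that joint continuity in $(x,y,v)$ requires rerunning the covering/perturbation estimate in the sup-norm of $\mathcal{C}(V;A)$ --- is a genuine refinement of a point the paper leaves implicit.
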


\section{Homogenization result: proof of Theorem \ref{theosol}\label{sec5}}

Let $F$ be the unique solution of (\ref{KerP}) in Proposition \ref{propkers}.
We assume here that (\ref{H2}) and (\ref{vfc}) hold true.

\begin{proof}
[Proof of Theorem \emph{\ref{theosol}}]Let $E=(\varepsilon_{n})_{n}$ denotes
any ordinary sequence of positive real numbers satisfying $\varepsilon
_{n}\rightarrow0$ when $n\rightarrow\infty$. We denote the generic element of
$E$ by $\varepsilon$ such that $"\varepsilon\rightarrow0"$ amounts to
$"\varepsilon_{n}\rightarrow0$ when $n\rightarrow\infty"$. By virtue of a
priori estimates (\ref{estimf}), we appeal to Theorem \ref{Theohom1} derive
the existence of a subsequence $E^{\prime}$ of $E$ and element $f_{0}\in
L^{2}(\mathbb{R}_{T}^{d}\times V;B_{A}^{2}(\mathbb{R}_{y}^{d}))$, such that as
$E^{\prime}\ni\varepsilon\rightarrow0$, one has
\begin{equation}
f_{\varepsilon}\rightarrow f_{0}+\mathcal{N}\text{ in }L^{2}(\mathbb{R}%
_{T}^{d}\times V)\text{-weak }\Sigma. \label{converg}%
\end{equation}

Thanks to assumption (\ref{H2}) on the absorption rate $\sigma$, it belongs to
$\mathcal{B}(\mathbb{R}^{d}\times V\times V;B_{A}^{2,\infty}(\mathbb{R}%
_{y}^{d}))$. Now let $\psi\in\mathcal{C}_{0}^{\infty}(\mathbb{R}_{T}^{d}\times
V)\otimes A^{\infty}$ and define $\psi^{\varepsilon}$ by $\psi^{\varepsilon
}(t,x,v)=\psi(t,x,\frac{x}{\varepsilon},v)$ ($(t,x,v)\in(0,T)\times
\mathbb{R}^{d}\times V$). Then $\psi^{\varepsilon}\in\mathcal{C}_{0}^{\infty
}(\mathbb{R}_{T}^{d}\times V)$ where $\mathbb{R}_{T}^{d}=(0,T)\times
\mathbb{R}_{x}^{d}$. Multiplying the first equation in (\ref{EH0}) by
$\frac{1}{\varepsilon}\psi^{\varepsilon}$, and integrating by parts using the
relation (\ref{intvuLw}) and the fact that $\nabla_{x}\cdot a(v)=0$, we
obtain
\begin{equation}
-\int_{V\times\mathbb{R}_{T}^{d}}\left[  f_{\varepsilon}\left(  \left(
\frac{\partial\psi}{\partial t}\right)  ^{\varepsilon}+\frac{1}{\varepsilon
}a(v)\cdot(\nabla_{x}\psi)^{\varepsilon}+\frac{1}{\varepsilon^{2}}%
a(v)\cdot(\nabla_{y}\psi)^{\varepsilon}+\frac{1}{\varepsilon^{2}}%
(\mathcal{Q}^{\ast}\psi)^{\varepsilon}\right)  \right]  dtdxd\mu(v)=0
\label{eqfaible}%
\end{equation}
where $\mathcal{Q}^{\ast}$ is defined by (\ref{adjOpQ}). Multiplying
(\ref{eqfaible}) by $\varepsilon^{2}$ and passing to the limit (thanks to
Theorem \ref{Theohom1}) as $\varepsilon\rightarrow0$, one has
\begin{align*}
&  \int_{\mathbb{R}_{T}^{d}\times V}\left[  f_{\varepsilon}(\varepsilon
^{2}\left(  \frac{\partial\psi}{\partial t}\right)  ^{\varepsilon}+\varepsilon
a(v)\cdot(\nabla_{x}\psi)^{\varepsilon}+a(v)\cdot(\nabla_{y}\psi
)^{\varepsilon})\right]  dtdxd\mu(v)\\
&  \rightarrow\int_{\mathbb{R}_{T}^{d}\times V}M\left(  f_{0}(a(v)\cdot
\nabla_{y}\psi)\right)  dtdxd\mu(v).
\end{align*}
Next, using $\sigma$ as a test function, we obtain, as $E^{\prime}%
\ni\varepsilon\rightarrow0$,
\begin{align*}
&  \int_{\mathbb{R}_{T}^{d}\times V}f_{\varepsilon}(\mathcal{Q}^{\ast}%
\psi)^{\varepsilon}dtdxd\mu(v)=\\
&  =\int_{\mathbb{R}_{T}^{d}\times V^{2}}[(\sigma^{\varepsilon}%
(x,v,w)f_{\varepsilon}(t,x,v)-\sigma^{\varepsilon}(x,w,v)f_{\varepsilon
}(t,x,w))\psi^{\varepsilon}]d\mu(w)d\mu(v)dtdx\\
&  \rightarrow\int_{\mathbb{R}_{T}^{d}\times V^{2}}M[(\sigma(x,\cdot
,v,w)f_{0}-\sigma(x,\cdot,w,v)f_{0})\psi]dtdxd\mu(w)d\mu(v)\\
&  =\int_{\mathbb{R}_{T}^{d}\times V}M(f_{0}(\mathcal{Q}^{\ast}\psi
))dtdxd\mu(v).
\end{align*}
We deduce that
\begin{equation}
-\int_{\mathbb{R}_{T}^{d}\times V}M(f_{0}(a(v)\cdot\nabla_{y}\psi
+\mathcal{Q}^{\ast}\psi))dtdxd\mu(v)=0. \label{eqfaible1}%
\end{equation}
Choosing $\psi(t,x,y,v)=\varphi(t,x)\phi(y,v)$ where $\varphi\in
\mathcal{C}_{0}^{\infty}(\mathbb{R}_{T}^{d})\text{ and }\phi\in A^{\infty
}\otimes\mathcal{C}^{\infty}(V)$, we obtain that (\ref{eqfaible1}) is
equivalent to
\[
-\int_{\mathbb{R}_{T}^{d}}\left(  \int_{V}M\left(  f_{0}(t,x,\cdot,v)\left(
a(v)\cdot\nabla_{y}\phi(\cdot,v)+\mathcal{Q}^{\ast}\phi(\cdot,v)\right)
\right)  d\mu(v)\right)  \varphi(t,x)dtdx=0.
\]
Since $\varphi$ is arbitrarily chosen, the preceding equation leads to
\begin{equation}
-\int_{V}M\left(  f_{0}(t,x,\cdot,v)\left(  a(v)\cdot\nabla_{y}\phi
(\cdot,v)+\mathcal{Q}^{\ast}\phi(\cdot,v)\right)  \right)  d\mu(v)=0.
\label{eq15}%
\end{equation}
So let us consider, for $(t,x)\in\mathbb{R}_{T}^{d}$ be fixed, the equation:
Find $F(t,x)\equiv F\in L^{2}(V;B_{A}^{2}(\mathbb{R}_{y}^{d}))$ such that
\begin{equation}
a(v)\cdot\nabla_{y}F-\mathcal{Q}F=0\text{ in }\mathbb{R}_{y}^{d}\times
V.\ \ \ \ \ \ \ \ \ \ \ \ \ \ \ \ \label{eq16}%
\end{equation}
Then appealing to [part (i) of] Proposition \ref{propkers}, there exists a
unique $F\in L^{2}(V;B_{A}^{2}(\mathbb{R}_{y}^{d}))$ with $\int_{V}%
M(F)d\mu(v)=1$, which solves (\ref{eq16}). As in the proof of (\ref{eq17}), we
observe that $F$ satisfies the variational formulation
\begin{equation}
\int_{V}M\left(  F(\cdot,v)\left(  a(v)\cdot\nabla_{y}\phi(\cdot
,v)+\mathcal{Q}^{\ast}\phi(\cdot,v)\right)  \right)  d\mu(v)=0\ \forall\phi\in
A^{\infty}\otimes\mathcal{C}^{\infty}(V). \label{eq19}%
\end{equation}
Next, set
\[
\rho_{0}(t,x)=\int_{V}M(f_{0}(t,x,\cdot,v))d\mu
(v)\ \ \ \ \ \ \ \ \ \ \ \ \ \ \ \ \ \ \ \ \ \ \ \
\]
and assume without lost of generality that $\rho_{0}$ is not identically zero.
Then this defines a function with the property that $\rho_{0}^{-1}%
(t,x)f_{0}(t,x,\cdot,\cdot)$ solves (\ref{eq19}) and $\int_{V}M(\rho_{0}%
^{-1}(t,x)f_{0}(t,x,\cdot,v))d\mu=1$. Invoking the uniqueness of the solution
of (\ref{eq19}) with the further normality condition $\int_{V}M(F)d\mu(v)=1$,
we get readily $F=\rho_{0}^{-1}f_{0}$, i.e.
\begin{equation}
f_{0}(t,x,y,v)=\rho_{0}(t,x)F(x,y,v)\text{ for a.e. }(t,x,y,v)\in
\mathbb{R}_{T}^{d}\times\mathbb{R}_{y}^{d}\times V. \label{solution}%
\end{equation}
Moreover, recalling that $f_{0}\in L^{2}(\mathbb{R}_{T}^{d}\times V;B_{A}%
^{2}(\mathbb{R}_{y}^{d}))=L^{2}(\mathbb{R}_{T}^{d};L^{2}(V;B_{A}%
^{2}(\mathbb{R}_{y}^{d})))$ we see that $\rho_{0}\in L^{2}(\mathbb{R}_{T}%
^{d})$.

Now let $\phi\in\mathcal{C}_{0}^{\infty}(\mathbb{R}_{T}^{d})$ and define
$\varphi$ by
\begin{equation}
P^{\ast}\varphi=-a(v)\cdot\nabla_{x}\phi,\ \int_{V}M(\varphi)d\mu
=0.\label{testfunc}%
\end{equation}
We recall that in view of (\ref{vfc}), Proposition \ref{propkers} and Lemma
\ref{lemmeregu2} ensure the existence of a unique $\varphi\in\mathcal{C}%
_{0}^{\infty}(\mathbb{R}_{x}^{d};L^{2}(V;B_{A}^{2}(\mathbb{R}_{y}^{d})))$
satisfying (\ref{testfunc}). Moreover if we consider the vector valued
function $\chi^{\ast}=(\chi_{j}^{\ast})_{1\leq j\leq d}$ defined by
\begin{equation}
\left\{
\begin{array}
[c]{l}%
\chi_{j}^{\ast}\in\mathcal{C}_{0}^{\infty}(\mathbb{R}_{x}^{d};L^{2}%
(V;B_{A}^{2}(\mathbb{R}_{y}^{d})))\\
P^{\ast}(\chi_{j}^{\ast})=-a_{j}(v)\text{ and }\int_{V}M(\chi_{j}^{\ast
}(x,\cdot,v))d\mu(v)=0,
\end{array}
\right.  \label{chi}%
\end{equation}
then from the uniqueness argument, it holds that
\[
\varphi(t,x,y,v)=\chi^{\ast}(x,y,v)\cdot\nabla_{x}\phi(t,x).
\]
This being so, we choose in the variational form of (\ref{EH0}) (see
(\ref{eqfaible})) the test function $\frac{1}{\varepsilon}\psi^{\varepsilon}$
where
\[
\psi^{\varepsilon}(t,x,v)=\phi(t,x)+\varepsilon\varphi\left(  t,x,\frac
{x}{\varepsilon},v\right)
\]
with $\phi$ be given above and $\varphi$ being defined by (\ref{testfunc}).
Then we get
\begin{equation}%
\begin{array}
[c]{l}%
\int_{\mathbb{R}_{T}^{d}\times V}f_{\varepsilon}[\frac{\partial\phi}{\partial
t}+\varepsilon\left(  \frac{\partial\varphi}{\partial t}\right)
^{\varepsilon}+\frac{1}{\varepsilon}a(v)\cdot\nabla_{x}\phi+a(v)\cdot
(\nabla_{x}\varphi)^{\varepsilon}\\
\ \ \ +\frac{1}{\varepsilon^{2}}(P^{\ast}\phi)^{\varepsilon}+\frac
{1}{\varepsilon}(P^{\ast}\varphi)^{\varepsilon}]dtdxd\mu(v)=0.
\end{array}
\label{formfaible0}%
\end{equation}
Since $\phi$ is independent of $y$ and $v$, we have $P^{\ast}\phi=0$. We infer
from (\ref{testfunc}) that $a(v)\cdot\nabla_{x}\phi+P^{\ast}\varphi=0$, so
that (\ref{formfaible0}) becomes
\[
\int_{\mathbb{R}_{T}^{d}\times V}\left[  f_{\varepsilon}\left(  \frac
{\partial\phi}{\partial t}+\varepsilon\left(  \frac{\partial\varphi}{\partial
t}\right)  ^{\varepsilon}+a(v)\cdot(\nabla_{x}(\chi^{\ast}\cdot\nabla_{x}%
\phi))^{\varepsilon}\right)  \right]  dtdxd\mu(v)=0.
\]
Letting $\varepsilon\rightarrow0$ (up to a subsequence),
\begin{equation}
\int_{\mathbb{R}_{T}^{d}\times V}M\left(  f_{0}\left(  \frac{\partial\phi
}{\partial t}+a(v)\cdot\nabla_{x}(\chi^{\ast}\cdot\nabla_{x}\phi)\right)
\right)  dtdxd\mu(v)=0.\label{pbmacro1}%
\end{equation}
We can compute each term of (\ref{pbmacro1}) as follows:
\begin{align}
\iint_{\mathbb{R}_{T}^{d}\times V}M\left(  f_{0}\frac{\partial\phi}{\partial
t}\right)  dtdxd\mu(v) &  =\int_{\mathbb{R}_{T}^{d}}\int_{V}M\left(  \rho
_{0}(x,t)F(x,\cdot,v)\frac{\partial\phi}{\partial t}\right)  dtdxd\mu
(v)\label{pbmacro2}\\
&  =\int_{\mathbb{R}_{T}^{d}}\rho_{0}(x,t)\left(  \int_{V}M\left(
F(x,\cdot,v)\right)  d\mu(v)\right)  \frac{\partial\phi}{\partial
t}dtdx\nonumber\\
&  =\int_{\mathbb{R}_{T}^{d}}\rho_{0}(x,t)\frac{\partial\phi}{\partial
t}dtdx\nonumber\\
&  =-\left\langle \frac{\partial\rho_{0}}{\partial t},\phi\right\rangle
\nonumber
\end{align}
since $\int_{V}M(F(x,\cdot,v))d\mu(v)=1$. Next,
\begin{align*}
&  \iint_{\mathbb{R}_{T}^{d}\times V}M\left(  f_{0}a(v)\cdot\nabla_{x}%
(\chi^{\ast}\cdot\nabla_{x}\phi)\right)  dtdxd\mu(v)\\
&  =\iint_{\mathbb{R}_{T}^{d}\times V}M\left(  \sum_{j=1}^{d}Fa_{j}%
(v)\frac{\partial}{\partial x_{j}}(\chi^{\ast}\cdot\nabla_{x}\phi)\right)
\rho_{0}dxdtd\mu\\
&  =\sum_{j=1}^{d}\iint_{\mathbb{R}_{T}^{d}\times V}M\left(  Fa_{j}%
(v)\frac{\partial}{\partial x_{j}}(\chi^{\ast}\cdot\nabla_{x}\phi)\right)
\rho_{0}dxdtd\mu\\
&  =\sum_{j=1}^{d}\iint_{\mathbb{R}_{T}^{d}\times V}M\left[  \frac{\partial
}{\partial x_{j}}(Fa_{j}(v)(\chi^{\ast}\cdot\nabla_{x}\phi))-a_{j}%
(v)(\chi^{\ast}\cdot\nabla_{x}\phi)\frac{\partial F}{\partial x_{j}}\right]
\rho_{0}dxdtd\mu\\
&  =\sum_{j=1}^{d}\iint_{\mathbb{R}_{T}^{d}\times V}\left[  \frac{\partial
}{\partial x_{j}}M\left(  (Fa_{j}(v)(\chi^{\ast}\cdot\nabla_{x}\phi)\right)
-M\left(  a_{j}(v)(\chi^{\ast}\cdot\nabla_{x}\phi)\frac{\partial F}{\partial
x_{j}}\right)  \right]  \rho_{0}dxdtd\mu\\
&  =I_{1}+I_{2}.
\end{align*}
We first deal with $I_{2}$ to obtain
\begin{align*}
I_{2} &  =-\sum_{j=1}^{d}\iint_{\mathbb{R}_{T}^{d}\times V}M\left(
a_{j}(v)(\chi^{\ast}\cdot\nabla_{x}\phi)\frac{\partial F}{\partial x_{j}%
}\right)  \rho_{0}dxdtd\mu\\
&  =-\iint_{\mathbb{R}_{T}^{d}\times V}M\left(  (a(v)\cdot\nabla_{x}%
F)(\chi^{\ast}\cdot\nabla_{x}\phi)\right)  \rho_{0}dxdtd\mu\\
&  =-\sum_{i=1}^{d}\iint_{\mathbb{R}_{T}^{d}\times V}\rho_{0}M\left(
(a(v)\cdot\nabla_{x}F)\chi_{i}^{\ast}\right)  \frac{\partial\phi}{\partial
x_{i}}dxdtd\mu.
\end{align*}
At this level, we set $U(x)=(U_{i}(x))_{1\leq i\leq d}$ where
\begin{equation}
U_{i}(x)=\int_{V}M\left(  \chi_{i}^{\ast}(a(v)\cdot\nabla_{x}F)\right)
d\mu.\label{1.10}%
\end{equation}
Then $U_{i}\in\mathcal{C}_{0}^{\infty}(\mathbb{R}^{d})$ and
\[
I_{2}=\sum_{i=1}^{d}\int_{\mathbb{R}_{T}^{d}}\rho_{0}U_{i}\frac{\partial\phi
}{\partial x_{i}}dxdt=\left\langle \Div_{x}(U\rho_{0}),\phi\right\rangle .
\]
As regard $I_{1}$, we have
\[
I_{1}=\sum_{j=1}^{d}\iint_{\mathbb{R}_{T}^{d}\times V}\rho_{0}\frac{\partial
}{\partial x_{j}}M\left(  (Fa_{j}(v)(\chi^{\ast}\cdot\nabla_{x}\phi)\right)
dxdtd\mu
\]
and it is an easy task in seeing that the function $x\mapsto\int_{V}M\left(
(Fa_{j}(v)(\chi^{\ast}\cdot\nabla_{x}\phi)\right)  d\mu$ is a compactly
supported $\mathcal{C}^{1}$ function, so that
\begin{align*}
I_{1} &  =\sum_{i,j=1}^{d}\int_{\mathbb{R}_{T}^{d}}\rho_{0}dxdt\left(
\int_{V}\frac{\partial}{\partial x_{j}}M\left(  (Fa_{j}(v)\chi_{i}^{\ast}%
\frac{\partial\phi}{\partial x_{i}}\right)  d\mu\right)  \\
&  =-\sum_{i,j=1}^{d}\left\langle \left(  \int_{V}M\left(  (Fa_{j}%
(v)(\chi^{\ast}\cdot\nabla_{x}\phi)\right)  d\mu\right)  ,\frac{\partial
\rho_{0}}{\partial x_{j}}\right\rangle .
\end{align*}
So we define the matrix $D(x)=(d_{ij})_{1\leq i,j\leq d}$ by $d_{ij}=\int
_{V}M\left(  \chi_{i}^{\ast}a_{j}(v)F\right)  d\mu$, that is,%
\begin{equation}
D(x)=\int_{V}M\left(  \chi^{\ast}\otimes(a(v)F)\right)  d\mu\text{ with }%
\chi^{\ast}\otimes(a(v)F)=(\chi_{i}^{\ast}a_{j}(v)F)_{1\leq i,j\leq
d}.\label{1.11}%
\end{equation}
Then
\[
I_{1}=-\left\langle D(x)\nabla\phi,\nabla\rho_{0}\right\rangle =-\left\langle
D(x)^{T}\nabla\rho_{0},\nabla\phi\right\rangle =\left\langle \Div_{x}\left(
D(x)^{T}\nabla\rho_{0}\right)  ,\phi\right\rangle
\]
where $D(x)^{T}$ stands for the transpose of the matrix $D(x)$. It emerges
that (\ref{pbmacro1}) is the variational formulation (in the usual sense of
distributions) of
\begin{equation}
\frac{\partial\rho_{0}}{\partial t}-{\Div}_{x}\left(  D(x)^{T}\nabla_{x}%
\rho_{0}+U(x)\rho_{0}\right)  =0\text{ in }\mathbb{R}_{T}^{d}\label{eqsol}%
\end{equation}
where $D(x)$ and $U(x)$ are defined respectively by (\ref{1.11}) and
(\ref{1.10}).

Let us now pay attention to the initial condition satisfied by $f_{0}$. We
assume without losing the generality that the function $\rho_{0}$ is smooth
enough. Let us consider a similar test function $\psi^{\varepsilon
}(t,x,v)=\phi(t,x)+\varepsilon\varphi(t,x,\frac{x}{\varepsilon},v)$ as defined
in (\ref{testfunc}), but with $\phi(t,x)=\eta(t)\phi(x)$, where $\eta
\in\mathcal{C}^{\infty}([0,T])$ and $\eta(T)=0$. It's already known that there
exists a unique $\chi^{\ast}\in\mathcal{C}_{0}^{\infty}(\mathbb{R}^{d}\times
V;B_{A}^{2}(\mathbb{R}_{y}^{d}))$ satisfying (\ref{chi}), so that
$\psi^{\varepsilon}(t,x,v)=\eta(t)(\phi(x)+\varepsilon\chi^{\ast}(x,\frac
{x}{\varepsilon},v)\cdot\nabla_{x}\phi(x))$. Then
\begin{align}
\int_{\mathbb{R}_{T}^{d}\times V}\frac{\partial f_{\varepsilon}}{\partial
t}\psi^{\varepsilon}dtd\mu(v)dx &  =\int_{\mathbb{R}_{x}^{d}\times V}\left(
\int_{0}^{T}\frac{\partial f_{\varepsilon}}{\partial t}\psi^{\varepsilon
}dt\right)  d\mu(v)dx\label{intini}\\
&  =\int_{\mathbb{R}_{x}^{d}\times V}\left[  \int_{0}^{T}\left(
\frac{\partial}{\partial t}(f_{\varepsilon}\psi^{\varepsilon})-f_{\varepsilon
}\left(  \frac{\partial\psi}{\partial t}\right)  ^{\varepsilon}\right)
dt\right]  dxd\mu(v)\nonumber\\
&  =-\eta(0)\int_{\mathbb{R}_{x}^{d}\times V}(f_{\varepsilon})_{t=0}%
(\phi+\varepsilon(\chi^{\ast})^{\varepsilon}\cdot\nabla_{x}\phi)dxd\mu
(v)\nonumber\\
&  -\int_{\mathbb{R}_{T}^{d}\times V}\eta^{\prime}f_{\varepsilon}%
(\phi+\varepsilon(\chi^{\ast})^{\varepsilon}\cdot\nabla_{x}\phi)dtdxd\mu
(v)\nonumber\\
&  =-\eta(0)\int_{\mathbb{R}_{x}^{d}\times V}f_{\varepsilon}^{in}%
(\phi+\varepsilon(\chi^{\ast})^{\varepsilon}\cdot\nabla_{x}\phi)dxd\mu
(v)\nonumber\\
&  -\int_{\mathbb{R}_{T}^{d}\times V}\eta^{\prime}f_{\varepsilon}%
(\phi+\varepsilon(\chi^{\ast})^{\varepsilon}\cdot\nabla_{x}\phi)dtdxd\mu
(v).\nonumber
\end{align}
But thanks to the first equation of (\ref{EH0}), we have
\begin{equation}
\frac{\partial f_{\varepsilon}}{\partial t}(t,x,v)=-\frac{1}{\varepsilon
}a(v)\cdot\nabla_{x}f_{\varepsilon}(t,x,v)+\frac{1}{\varepsilon^{2}%
}\mathcal{Q}_{\varepsilon}f_{\varepsilon}(t,x,v).\label{int}%
\end{equation}
Multiplying (\ref{int}) by $\psi^{\varepsilon}(t,x,v)=\phi(t,x)+\varepsilon
\varphi(t,\frac{x}{\varepsilon},x,v)=\eta(t)(\phi(x)+\varepsilon\chi^{\ast
}(x,\frac{x}{\varepsilon},v)\cdot\nabla_{x}\phi(x))$ and integrating by parts
on $\mathbb{R}_{T}^{d}\times V$, one has for each term in the right-hand
side,
\begin{align}
&  \int_{\mathbb{R}_{T}^{d}\times V}\psi^{\varepsilon}(a(v)\cdot\nabla
_{x}f_{\varepsilon})d\mu(v)dxdt\label{intdelx}\\
&  =-\int_{\mathbb{R}_{T}^{d}\times V}\eta f_{\varepsilon}[a(v)\cdot\nabla
_{x}\phi+\varepsilon a(v)\cdot(\nabla_{x}\varphi)^{\varepsilon}+a(v)\cdot
(\nabla_{y}\varphi)^{\varepsilon}]d\mu(v)dxdt\nonumber
\end{align}
and (in view of (\ref{intvuLw}), and since $\mathcal{Q}_{\varepsilon}^{\ast
}\phi=0$)
\begin{align}
\int_{\mathbb{R}_{T}^{d}\times V}\psi^{\varepsilon}(\mathcal{Q}_{\varepsilon
}f_{\varepsilon})dtdxd\mu(v) &  =\int_{\mathbb{R}_{T}^{d}\times V}%
f_{\varepsilon}\eta(t)(\mathcal{Q}_{\varepsilon}^{\ast}(\phi+\varepsilon
\varphi^{\varepsilon})dtdxd\mu(v)\label{intLf}\\
&  =\varepsilon\int_{\mathbb{R}_{T}^{d}\times V}f_{\varepsilon}\eta
(t)(\mathcal{Q}_{\varepsilon}^{\ast}\varphi)^{\varepsilon}dtdxd\mu
(v).\nonumber
\end{align}
Taking into account (\ref{intini})-(\ref{intLf}), we obtain
\begin{align*}
&  \int_{\mathbb{R}_{T}^{d}\times V}f_{\varepsilon}\left(  \eta^{\prime}%
(\phi+\varepsilon(\chi^{\ast})^{\varepsilon}\cdot\nabla_{x}\phi)+\eta\left[
a(v)\cdot(\nabla_{x}\varphi)^{\varepsilon}+\frac{1}{\varepsilon}\left\{
(P^{\ast}\varphi)^{\varepsilon}+a(v)\cdot\nabla_{x}\phi\right\}  \right]
\right)  dtdxd\mu(v)\\
&  =\eta(0)\int_{\mathbb{R}_{x}^{d}\times V}f^{0}(\phi+\varepsilon(\chi^{\ast
})^{\varepsilon}\cdot\nabla_{x}\phi)dxd\mu(v).
\end{align*}
Passing to the limit as $E^{\prime}\ni\varepsilon\rightarrow0$, using the fact
that $P^{\ast}(\varphi)=-a(v)\cdot\nabla_{x}\phi$ and $\varphi=\chi^{\ast
}\cdot\nabla_{x}\phi$ we obtain
\begin{equation}
\int_{\mathbb{R}_{T}^{d}\times V}M(f_{0}[\eta^{\prime}\phi+\eta a(v)\cdot
\nabla_{x}(\chi^{\ast}\cdot\nabla_{x}\phi)])dtd\mu(v)dx=\eta(0)\int
_{\mathbb{R}_{x}^{d}\times V}f^{0}(x,v)\phi(x)dxd\mu(v).\label{eqci}%
\end{equation}
Now, integrating by part the left-hand side of (\ref{eqci}) over $t$ and $x$,
using (\ref{KerP}) and (\ref{solution}), we have
\begin{align}
&  \int_{\mathbb{R}_{T}^{d}\times V}M(f_{0}[\eta^{\prime}\phi+\eta
a(v)\cdot\nabla_{x}(\chi^{\ast}\cdot\nabla_{x}\phi)])dtd\mu(v)dx\label{eqci1}%
\\
&  =-\int_{\mathbb{R}_{T}^{d}}\left(  \frac{\partial\rho_{0}}{\partial
t}-\Div_{x}\left(  U(x)\rho_{0}\right)  -\Div_{x}\left(  D(x)^{T}\nabla
_{x}\rho_{0}\right)  \right)  \eta(t)\phi(x)dtdx\nonumber\\
&  +\eta(0)\int_{\mathbb{R}_{x}^{d}}\rho_{0}(0,x)\phi(x)dx,\nonumber
\end{align}
where $U(x)$ and $D(x)$ are defined above. Combining (\ref{eqci}) and
(\ref{eqci1}) in view of (\ref{eqsol}), we are led to
\[
\eta(0)\int_{\mathbb{R}_{x}^{d}}\rho_{0}(0,x)\phi(x)dx=\eta(0)\int
_{\mathbb{R}_{x}^{d}}\left(  \int_{V}f^{0}(x,v)d\mu(v)\right)  \phi(x)dx.
\]
And thanks once more to the equivalence in the sense of distributions, we
obtain
\begin{equation}
\rho_{0}(0,x)=\int_{V}f^{0}(x,v)d\mu(v)\text{ in }\mathbb{R}_{x}%
^{d}.\label{cdin}%
\end{equation}
Combining (\ref{eqsol}) and (\ref{cdin}), we get (\ref{solf0}).

To end this proof, we just notice that, it is a standard process to prove
existence and uniqueness for (\ref{solf0}); see e.g. \cite{Ladyzenskaja}.
\end{proof}

\begin{remark}
\label{r0}\emph{In the proof of Theorem \ref{theosol}, we have used the
condition (\ref{vfc}). However it is worth noticing that (\ref{vfc}) is made
only for simplification of the presentation of the results. It is to ensure
the existence of the solution to (\ref{chi}). If it is not satisfied, we may
replace in (\ref{chi}) the function }$a_{j}(v)$\emph{ (where }$a(v)=(a_{j}%
(v))_{1\leq j\leq d}$\emph{) by }$a_{j}(v)-\int_{V}M(a_{j}(v)F(x,\cdot
,v))d\mu(v)$\emph{, so that the solution of the corresponding equation exists.
As seen in \cite{Goudon-Mellet}, the conclusion of Theorem \ref{theosol} still
holds, but with a slightly change on the coefficients }$D(x)$\emph{ and
}$U(x)$\emph{.}
\end{remark}

\section{Applications of Theorem \ref{theosol} to some physical
situations\label{sec6}}

The proof of Theorem \ref{theosol} has been performed under the abstract
assumption (\ref{H2}). This assumption models in some sense the way the
heterogeneities (microstructures) are distributed in the medium in which the
phenomenon occurs. As we have seen in the statement of Theorem \ref{theosol},
(\ref{H2}) plays a crucial role in determining of the drift and diffusion
operators, in that their coefficients depend on functions having the same
behaviour as the function $y\mapsto\sigma(x,y,v,w)$. In this section we
provide some physical situations in which (\ref{H2}) holds.

\subsection{Periodic homogenization problem \label{subsec5.1}}

We assume in this section that the microstructures are uniformly distributed
in the medium here represented by $\mathbb{R}_{x}^{d}$. This amounts to saying
that the scattering rate function is periodic with respect to the variable
$y$. We may without losing the generality assume that the function
$y\mapsto\sigma(x,y,v,w)$ is periodic of period $1$ in each direction, that
is,
\[
\sigma(x,y+k,v,w)=\sigma(x,y,v,w)\text{ for all }(x,v,w)\in\mathbb{R}%
^{d}\times V\times V\text{, all }k\in\mathbb{Z}^{d}\text{ and a.e. }y\in Y
\]
where $Y=(0,1)^{d}$ and $\mathbb{Z}$ denotes the integers. Then we are led to
(\ref{H2}) with $A=\mathcal{C}_{per}(Y)$, the Banach algebra of continuous
$Y$-periodic functions on $\mathbb{R}^{d}$, which is an algebra with mean
value, the mean value being here determined by $M(u)=\int_{Y}u(y)dy$ for any
$u\in\mathcal{C}_{per}(Y)$. In that case, we have $B_{A}^{p}(\mathbb{R}%
^{d})=L_{per}^{p}(Y)$ ($1\leq p\leq\infty$), the Banach space of functions
$u\in L_{loc}^{p}(\mathbb{R}^{d})$ that are $Y$-periodic. Assumptions
(\ref{H2}) and (\ref{vfc}) become respectively
\begin{equation}
\sigma\in\mathcal{B}(\mathbb{R}^{d}\times V\times V;L_{per}^{\infty
}(Y))\label{5.1}%
\end{equation}
and
\begin{equation}
\iint_{Y\times V}a(v)F(x,\cdot,v)dyd\mu(v)=0\text{ and }F>0\text{
a.e.,}\label{5.2}%
\end{equation}
and Theorem \ref{theosol} therefore takes the following form.

\begin{theorem}
\label{t5.1}Let $A$ be an algebra with mean value on $\mathbb{R}^{d}$. Assume
\emph{(\ref{5.1})} and \emph{(\ref{5.2})} hold. For each $\varepsilon>0$, let
$f_{\varepsilon}$ be the unique solution of \emph{(\ref{EH0})}. Then, there
exists $f_{0}\in L^{2}(\mathbb{R}_{T}^{d}\times V;L_{per}^{2}(Y))$ such that
the sequence $(f_{\varepsilon})_{\varepsilon\in E}$ weakly two-scale converges
in $L^{2}((0,T)\times\mathbb{R}_{x}^{d}\times V)$ towards $f_{0}$. Moreover
$f_{0}$ has the form $f_{0}(t,x,y,v)=F(x,y,v)\rho_{0}(t,x)$ where
$F\in\mathcal{C}^{1}(\mathbb{R}_{x}^{d};L^{2}(V;L_{per}^{2}(Y))\cap\ker P)$ is
given by \emph{(\ref{0.1})} and $\rho_{0}$ is the unique solution of the
Cauchy problem
\[
\left\{
\begin{array}
[c]{l}%
\frac{\partial\rho_{0}}{\partial t}-\Div_{x}\left(  D(x)^{T}\nabla_{x}\rho
_{0}+U(x)\rho_{0}\right)  =0\text{ in }(0,T)\times\mathbb{R}_{x}%
^{d}\ \ \ \ \ \ \ \ \ \ \ \ \ \\
\\
\rho_{0}(0,x)=\int_{V}f^{0}(x,v)d\mu(v)\text{ in }\mathbb{R}_{x}^{d}%
\end{array}
\right.
\]
where
\[
D(x)=\iint_{Y\times V}\chi^{\ast}\otimes(a(v)F)dyd\mu(v)=\left(
\iint_{Y\times V}\chi_{i}^{\ast}a_{j}(v)Fdyd\mu(v)\right)  _{1\leq i,j\leq d}%
\]
with $(\chi^{\ast}\otimes a(v)F)=\left(  \chi_{i}^{\ast}a_{j}(v)F\right)
_{1\leq i,j\leq d}$,%
\[
U(x)=\iint_{Y\times V}\chi^{\ast}a(v)\cdot\nabla_{x}Fdyd\mu(v)=\left(
\iint_{Y\times V}\chi_{i}^{\ast}a(v)\cdot\nabla_{x}Fdyd\mu(v)\right)  _{1\leq
i\leq d}%
\]
and where $\chi^{\ast}$ the unique solution in $\mathcal{C}_{0}^{\infty
}(\mathbb{R}^{d}\times V;L_{per}^{2}(Y))^{d}$ of the corrector problem:
\[
P^{\ast}\chi^{\ast}=-a(v)\text{ and }\iint_{Y\times V}\chi^{\ast}%
(x,\cdot,v)dyd\mu(v)=0,
\]
$P^{\ast}$ being the adjoint operator of $P$.
\end{theorem}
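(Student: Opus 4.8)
The plan is to derive Theorem \ref{t5.1} as a direct specialization of the general homogenization result Theorem \ref{theosol} to the concrete algebra with mean value $A=\mathcal{C}_{per}(Y)$, $Y=(0,1)^{d}$. First I would verify that $\mathcal{C}_{per}(Y)$ genuinely fits the abstract framework of Subsection \ref{subsec3.1}: it is a closed subalgebra of $\mathrm{BUC}(\mathbb{R}^{d})$ containing the constants, stable under complex conjugation and translation, and every $u\in\mathcal{C}_{per}(Y)$ admits the mean value $M(u)=\int_{Y}u(y)\,dy$ (the limit in (\ref{eq4.1}) exists by the classical averaging property of periodic functions). Crucially, I would check that this algebra is \emph{ergodic}: if a periodic $u$ satisfies $\Vert u(\cdot+y)-u\Vert_{1}=0$ for every $y$, then $u$ equals its mean almost everywhere, hence is constant in $\mathcal{B}_{A}^{1}$. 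This places $\mathcal{C}_{per}(Y)$ squarely within the standing hypotheses and lets the machinery of Sections \ref{sec3} and \ref{sec4} apply verbatim.

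The second step is to identify the relevant function spaces under this choice of $A$. For the periodic algebra one has $B_{A}^{p}(\mathbb{R}^{d})=L_{per}^{p}(Y)$, and since the seminorm $\Vert u\Vert_{p}=(M(\vert u\vert^{p}))^{1/p}=(\int_{Y}\vert u\vert^{p}\,dy)^{1/p}$ is in fact the usual $L^{p}(Y)$ norm, the null space $\mathcal{N}=\{u:\Vert u\Vert_{p}=0\}$ reduces to $\{0\}$. Consequently the Banach quotient $\mathcal{B}_{A}^{p}(\mathbb{R}^{d})=B_{A}^{p}(\mathbb{R}^{d})/\mathcal{N}$ collapses to $L_{per}^{p}(Y)$ itself, so the limit $f_{0}+\mathcal{N}$ of Theorem \ref{theosol} may be identified with a genuine function $f_{0}\in L^{2}(\mathbb{R}_{T}^{d}\times V;L_{per}^{2}(Y))$. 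Likewise, weak $\Sigma$-convergence against test functions $\varphi\in L^{p^{\prime}}(\mathbb{R}_{T}^{d}\times V;A)$ becomes, via the reformulation recorded just after Definition \ref{d2}, exactly weak two-scale convergence in $L^{2}((0,T)\times\mathbb{R}_{x}^{d}\times V)$.

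Having made these identifications, the remaining work is purely notational: I would substitute $M(u)=\int_{Y}u\,dy$ everywhere in the conclusion of Theorem \ref{theosol}. Assumption (\ref{H2}) then reads as (\ref{5.1}), and the vanishing flux condition (\ref{vfc}) becomes (\ref{5.2}); the operators $P$, $P^{\ast}$, $\mathcal{Q}$, $\mathcal{Q}^{\ast}$ retain their definitions with $\nabla_{y}$ and the $Y$-averaged collision kernels. Replacing $\int_{V}M(\cdot)\,d\mu(v)$ by $\iint_{Y\times V}\cdot\,dy\,d\mu(v)$ in the definitions of $D(x)$ and $U(x)$, and in the normalization and corrector conditions (\ref{0.1}) and (\ref{defchi}), yields precisely the formulas stated in Theorem \ref{t5.1}. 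Existence, uniqueness and positivity of the equilibrium $F$ and of the corrector $\chi^{\ast}$, together with the regularity $F\in\mathcal{C}^{1}(\mathbb{R}_{x}^{d};L^{2}(V;L_{per}^{2}(Y))\cap\ker P)$, are inherited directly from Proposition \ref{propkers} and Lemmas \ref{lemmeregu1}--\ref{lemmeregu2}.

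The one point demanding genuine care---and the only place where the argument exceeds bookkeeping---is the verification that $\mathcal{C}_{per}(Y)$ is an ergodic algebra with mean value and that $\mathcal{N}=\{0\}$, since it is precisely this collapse of the null space that upgrades the abstract $\Sigma$-limit in a quotient space to an honest two-scale limit. Once that identification is secured, Theorem \ref{t5.1} follows from Theorem \ref{theosol} with no further analysis.
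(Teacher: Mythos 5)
Your proposal is correct and follows exactly the route the paper takes: Section \ref{sec6} obtains Theorem \ref{t5.1} by specializing Theorem \ref{theosol} to $A=\mathcal{C}_{per}(Y)$, identifying $B_{A}^{p}(\mathbb{R}^{d})=L_{per}^{p}(Y)$ and translating $M(u)=\int_{Y}u\,dy$ so that (\ref{H2}) and (\ref{vfc}) become (\ref{5.1}) and (\ref{5.2}). Your explicit verification that the algebra is ergodic and that $\mathcal{N}=\{0\}$ (so the quotient limit becomes an honest two-scale limit) is a point the paper leaves implicit, but it is correct and does not change the argument.
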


Theorem \ref{t5.1} is nothing else but Theorem 3.11 in \cite{Goudon-Mellet}.

\subsection{Almost periodic homogenization problem \label{subsec5.2}}

We assume in this subsection that the microstructures are almost uniformly
distributed in the medium, which we represent by the function $y\mapsto
\sigma(x,y,v,w)$ is almost periodic in the Besicovitch sense
\cite{Besicovitch}. It is an easy exercise in seeing that the convenient
algebra with mean value we should use here is the Banach algebra
$A=AP(\mathbb{R}^{d})$ of continuous almost periodic functions defined on
$\mathbb{R}^{d}$ \cite{Bohr}. A function $u:\mathbb{R}^{d}\rightarrow
\mathbb{C}$ is said to be continuous almost periodic (or Bohr almost periodic)
if the set of all its translates $\{u(\cdot+a):a\in\mathbb{R}^{d}\}$ is
pre-compact in the $\sup$-norm topology in $\mathbb{R}^{d}$. The Banach
algebra $AP(\mathbb{R}^{d})$ is an algebra with mean value with the property
that the mean value of a function $u\in AP(\mathbb{R}^{d})$ is the unique
constant belonging to the closed convex hull of the family of the translates
$\{u(\cdot+a):a\in\mathbb{R}^{d}\}$; see e.g. \cite{Jacobs}.

With this in mind, we observe that the space $B_{A}^{2}(\mathbb{R}^{d})$ is
actually the well known Besicovitch space of almost periodic functions on
$\mathbb{R}^{d}$ \cite{Besicovitch}. The conclusion of Theorem \ref{theosol}
holds in this case, and it is a generalization of the periodic setting since
any periodic function is also almost periodic.

\subsection{Asymptotic periodic homogenization problem \label{subsec5.3}}

In this subsection we assume that the medium is a periodic one with defects,
that is, the microstructures, are up to local perturbations, uniformly
distributed. This means that the function $y\mapsto\sigma(x,y,v,w)$ can be
obtained as a sum of a periodic function and a function that vanishes at
infinity. The suggested algebra with mean value in this case is therefore
$A=\mathcal{C}_{per}(Y)+\mathcal{C}_{0}(\mathbb{R}^{d})$ where $\mathcal{C}%
_{0}(\mathbb{R}^{d})$ stands for the space of those continuous functions $u$
in $\mathbb{R}^{d}$ that vanish at infinity. Since $\lim_{\left\vert
y\right\vert \rightarrow\infty}u(y)=0$ for any $u\in\mathcal{C}_{0}%
(\mathbb{R}^{d})$, any element in $A$ is asymptotically a periodic function.
The mean value of a function in $A$ is defined as follows: for $A\ni
u=u_{per}+u^{0}$ with $u_{per}\in\mathcal{C}_{per}(Y)$ and $u^{0}%
\in\mathcal{C}_{0}(\mathbb{R}^{d})$, $M(u)=\int_{Y}u_{per}(y)dy$.

\subsection{Asymptotic periodic homogenization problem \label{subsec5.4}}

In Subsection \ref{subsec5.3} above, we may replace $\mathcal{C}_{per}(Y)$ by
$AP(\mathbb{R}^{d})$ and get the algebra of asymptotic almost periodic
functions denoted by $A=AP(\mathbb{R}^{d})+\mathcal{C}_{0}(\mathbb{R}^{d})$.
In this case, we may assume that the function $y\mapsto\sigma(x,y,v,w)$
belongs to $B_{A}^{2}(\mathbb{R}^{d})$. The conclusion of Theorem
\ref{theosol} holds as well.

\end{document}